\documentclass[12pt]{article}
\usepackage{amsthm}

\usepackage[ansinew]{inputenc}
\usepackage{graphicx}
\usepackage{color}
\usepackage[colorlinks]{hyperref}

\usepackage{enumerate,latexsym}
\usepackage{latexsym}
\usepackage{amsmath,amssymb}
\usepackage{graphicx}
\usepackage{times}
\usepackage{float}
\usepackage[colorlinks]{hyperref}
\usepackage{hyperref}
\hypersetup{%
	colorlinks = true,
	linkcolor  = black
}
\newfont{\bb}{msbm10 at 11pt}
\newfont{\bbsmall}{msbm8 at 8pt}

\def\rth{\mathbb{R}^3}
\def\R{\mathbb{R}}
\def\B{\mathbb{B}}
\def\N{\mathbb{N}}

\def\S{\Sigma}
\def\C{\mathbb{C}}

\def\Pe{\mathbb{P}}

\def\M{\mathbb{M}}
\def\D{\mathbb{D}}
\def\Pe{\mathbb{P}}
\def\esf{\mathbb{S}}

\newcommand{\la}{\looparrowright}

\newcommand{\ben}{\begin{enumerate}}
\newcommand{\bit}{\begin{itemize}}
\newcommand{\een}{\end{enumerate}}
\newcommand{\eit}{\end{itemize}}
\newcommand{\wh}{\widehat}
\newcommand{\ds}{\displaystyle}
\newcommand{\Int}{\mbox{\rm Int}}

\newcommand{\Inj}{\mbox{\rm Inj}}

\newcommand{\wt}{\widetilde}

\newcommand{\ed}{\end{document}}
\newcommand{\ov}{\overline}

\newcommand{\sff}{\mathrm{I\!I}}

\def\a{{\alpha}}

\def\t{{\theta}}

\def\g{{\gamma}}
\def\G{{\Gamma}}
\def\l{{\lambda}}

\def\de{{\delta}}
\def\be{{\beta}}
\def\ve{{\varepsilon}}

\def\cB{\mathcal{B}}

\def\cM{\mathcal{M}}

\let\8=\infty \let\0=\emptyset

\def\cte.{\mathop{\rm cte.}\nolimits}

\def\Re{\mathop{\rm Re }\nolimits}

\def\N{\mathbb{N}}

\def\B{\mathbb{B}}

\def\R{\mathbb{R}}

\def\C{\mathbb{C}}

\def\D{\mathbb{D}}

\newtheorem{theorem}{Theorem}[section]
\newtheorem{lemma}[theorem]{Lemma}
\newtheorem{proposition}[theorem]{Proposition}

\newtheorem{remark}[theorem]{Remark}
\newtheorem{corollary}[theorem]{Corollary}
\newtheorem{definition}[theorem]{Definition}

\newtheorem{claim}[theorem]{Claim}




\numberwithin{equation}{section}

\usepackage{fullpage}

\usepackage{pdfsync}
\definecolor{pp}{rgb}{.5,0,.7}

\definecolor{rr}{rgb}{.8,0,.3}

\begin{document}

\begin{title}
{Geometry of branched minimal surfaces of finite index}
\end{title}

\begin{author}
{William H. Meeks III\thanks{This research was partially supported
 by CNPq - Brazil, grant no. 400966/2014-0. }
 \and Joaqu\'\i n P\' erez\thanks{Research of both
 authors was partially supported by
MINECO/MICINN/FEDER grant no. PID2020-117868GB-I00,
regional grant P18-FR-4049,
and by the “Maria de Maeztu” Excellence Unit IMAG,
reference CEX2020-001105-M, funded by MCINN/AEI/10.13039/501100011033/ CEX2020-001105-M.
}}
\end{author}
\maketitle
\vspace{-.6cm}

\begin{abstract}	
Given $I,B\in\N\cup \{0\}$, we investigate the
existence and geometry of complete finitely
branched minimal surfaces $M$ in $\rth$ with Morse index at most
$I$  and total branching order at most $B$. 
Previous works of Fischer-Colbrie~\cite{fi1} and Ros~\cite{ros9} explain that 
such surfaces are precisely the	complete minimal surfaces  in $\rth$ of finite total
curvature and finite total branching order. Among other things, 
we derive scale-invariant weak chord-arc type results for such an $M$ with 
estimates that are given in terms of $I$ and $B$. In order to obtain
some of our main results for these special surfaces, we obtain general intrinsic
monotonicity of area formulas for  $m$-dimensional submanifolds $\S$ of
an $n$-dimensional Riemannian manifold $X$, where these area estimates depend on the
geometry of $X$ and  upper bounds on the lengths of the mean curvature vectors of $\S$. 
We also describe a family of complete, finitely branched minimal surfaces in $\R^3$ 
that are stable and non-orientable; these examples generalize the classical Henneberg minimal surface.
\vspace{.3cm}

\noindent{\it Mathematics Subject Classification:} Primary 53A10,
   Secondary 49Q05, 53C42

\noindent{\it Key words and phrases:} Constant mean curvature, finite index
$H$-surfaces, intrinsic monotonicity of area formula, area
estimates for constant mean curvature surfaces,
branched minimal surfaces of finite index, weak chord-arc results for minimal surfaces.
\end{abstract}
\maketitle

\section{Introduction}  \label{sec:introduction}
Let $X$ be a complete Riemannian $3$-manifold with positive
injectivity radius $\Inj(X)$.
Let $M$ be a complete immersed surface in $X$ of constant mean curvature
(CMC). The Jacobi operator of $M$ is the
Schr\"{o}dinger operator
\[
L=\Delta +|A_M|^2+ \mbox{Ric}(N),
\]
where
$\Delta $ is the Laplace-Beltrami operator on $M$, $|A_M|^2$ is the square
of the norm of its second fundamental form and $\mbox{Ric}(N)$ denotes
the Ricci curvature of $X$ in the direction of the unit normal vector $N$ to $M$;
the index of $M$ is the index of $L$,
\[
\mbox{Index}(M)=\lim _{R\to \infty }\mbox{Index}(B_M(p,R)),
\]
where $B_M(p,R)$ is the intrinsic metric ball in $M$ of
radius $R>0$ centered at a point $p\in M$, and
$\mbox{Index}(B_M(p,R))$ is the number of negative eigenvalues of $L$
on $B_M(p,R)$ with Dirichlet boundary conditions.
Here, we have assumed that the immersion
is two-sided (which is the case when the constant value $H$ of the mean curvature of 
$M$ is not zero).  In the case, the immersion is
one-sided, then the index is defined in a similar manner using compactly supported
variations in the normal bundle; see Definition~\ref{DefIndexNO} for details.

Given $I,B\in\N\cup \{0\}$, we investigate the
existence and geometry of complete finitely
branched minimal surfaces $M$ in $\rth$ with index at most
$I$  and total branching order at most $B$;
let $\cM(I,B)$ be the space of such examples.
Works of Fischer-Colbrie~\cite{fi1} and
Ros~\cite{ros9} ensure that the surfaces 
\[
M\in \bigcup_{I,B\in \N\cup \{ 0\}} \cM(I,B)
\]
are precisely the complete minimal surfaces  in $\rth$ of finite total curvature
and  finite total branching order.
One goal of this paper  is to derive certain scale-invariant weak chord-arc type 
results for surfaces in $\cM(I,B)$ with explicit estimates
given in terms of $I$ and $B$; see Proposition~\ref{propos5.5}
for these estimates. We also describe some interesting
new examples of non-orientable surfaces in $\cM(0,B)$, $B\geq 2$.
These new examples of complete stable branched minimal surfaces
generalize the classical Henneberg surface of finite total curvature $-2\pi$ 
that has two simple branch points; these surfaces are
described analytically and geometrically at the end of Section~\ref{sec3}.  
In Section~\ref{sec3}
we also explain how to extend to $\cM(I,B)$ the geometric and
topological lower bound estimates for the index of complete unbranched minimal surfaces
with finite total curvature due to Chodosh and Maximo~\cite{ChMa2}.

In Section~\ref{volgrowth} we study the area of intrinsic balls $B_M(x,R)$ of
an $n$-dimensional submanifold of a Riemannian $m$-manifold $M$, where
$x\in M$ and $0<R\leq \Inj(X)$.  In particular,
we derive explicit upper bounds for the area growth of
$B_M(x,R)$ as a function of $R\in (0,\Inj(X)]$, that depend on upper bounds 
for the sectional curvature of the extrinsic geodesic ball $B_X(x,R)$ and 
for the length of the mean curvature vector of $M$ restricted to  $B_M(x,R)$. 
In Section~\ref{section5} we will apply this intrinsic area estimate 
to obtain certain scale-invariant weak chord-arc bounds for any surface $M\in \cM(I,B)$;
see Proposition~\ref{propos5.5}.

The intrinsic area estimates in Section~\ref{volgrowth} will also be applied
in our papers~\cite{mpe19} and \cite{mpe18} to study CMC surfaces
of bounded index in spaces $X$ of dimension three.  The
monotonicity-of-area type formulae in Proposition~\ref{lemma8.4}, the
weak chord-arc results given in Proposition~\ref{propos5.5} and other theoretical 
results in Section~\ref{sec3}, such as the aforementioned extension of the Chodosh and Maximo
lower bound estimates for the index of surfaces in $\cM(I,B)$,
have important applications to the proof to the Hierarchy
Structure Theorem~1.1 in~\cite{mpe18}; this theorem is
a fundamental  result that describes the structure
of complete CMC surfaces of finite index in a 3-dimensional $X$
with $\Inj(X)>\de>0$ and having a fixed
an upper bound on its absolute sectional curvature function, and it was our
main motivation for developing the results in the present paper.
\vspace{.1cm}

{\em Acknowledgments}:
We thank Otis Chodosh for explaining to us his work~\cite{ChMa2}
with Davi Maximo on lower bounds for the
index of a complete branched minimal surface
in $\rth$ of finite total curvature, in terms of its genus and number of ends
counted with multiplicity, and how the analysis by Karpukhin~\cite{Kar1} can be
used to allow finitely many branch points.

\section{Volume growth of intrinsic balls in
	submanifolds of bounded mean curvature vector}
\label{volgrowth}
Let $M$  be an immersed $n$-dimensional submanifold in a geodesic ball $B_X(x_0,R_1)$ of an
$m$-dimensional manifold $(X,g)$, with $x_0\in M$ and $R_1$ less or than equal to the
injectivity radius function $\Inj_X(x_0)$ of $X$ at $x_0$. In this section we will find
lower bounds for the $n$-dimensional volume $A(r)$ of  $B_M(x_0,r)$, as a
function of $r\in (0,\Inj_X(x_0)]$;
see Proposition~\ref{lemma8.4} below for a precise description.

Let us denote by $\ov{\Delta}, \Delta $ the Laplacians in $X$ and $M$,
respectively. Analogously, $\ov{\nabla},\nabla$ will
stand for the Levi-Civita connections and
gradient operators. Let $N_{n+1},\ldots ,N_m$ be a local orthonormal basis of
the normal bundle to $M$, and let $\vec{H}$ be the mean curvature vector of $M$.
We start with a well-known formula.
\begin{lemma}
	\label{lemma8.1}
	Given $f\in C^{\infty }(X)$, $(\ov{\Delta}f)|_M
	=\Delta (f|_M)-n\, \vec{H}(f)+\sum_{j=n+1}^m g(\ov{\nabla}_{N_j}\ov{\nabla}f,N_j)$.
\end{lemma}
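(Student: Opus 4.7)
The plan is to prove the identity pointwise at an arbitrary $p\in M$, by working in a convenient orthonormal frame that splits tangent and normal directions, and then invoking the Gauss formula.

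First I would choose an orthonormal basis $\{e_1,\ldots ,e_n\}$ of $T_pM$, completed by $\{N_{n+1},\ldots ,N_m\}$ to an orthonormal basis of $T_pX$, and then extend the $e_i$ to a local orthonormal frame of $M$ near $p$ that is geodesic at $p$, i.e.\ $\nabla_{e_i}e_j(p)=0$. With respect to the full frame at $p$,
\[
(\ov{\Delta}f)(p)=\sum_{i=1}^{n}g(\ov{\nabla}_{e_i}\ov{\nabla}f,e_i)(p)+\sum_{j=n+1}^{m}g(\ov{\nabla}_{N_j}\ov{\nabla}f,N_j)(p).
\]
The second sum is already the third term on the right-hand side of the desired formula, so all the work lies in identifying the first sum with $\Delta(f|_M)(p)-n\,\vec{H}(f)(p)$.

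For the first sum, I would use that $g(\ov{\nabla}f,e_i)=e_i(f)=e_i(f|_M)$ since each $e_i$ is tangent to $M$, together with the Leibniz rule
\[
g(\ov{\nabla}_{e_i}\ov{\nabla}f,e_i)=e_i(g(\ov{\nabla}f,e_i))-g(\ov{\nabla}f,\ov{\nabla}_{e_i}e_i).
\]
The Gauss formula gives $\ov{\nabla}_{e_i}e_i=\nabla_{e_i}e_i+\sff(e_i,e_i)$, and evaluating at $p$ the term $\nabla_{e_i}e_i(p)$ vanishes by choice of frame, so
\[
g(\ov{\nabla}_{e_i}\ov{\nabla}f,e_i)(p)=e_i(e_i(f|_M))(p)-g(\ov{\nabla}f,\sff(e_i,e_i))(p).
\]
Summing over $i=1,\ldots ,n$, the first terms yield $\Delta(f|_M)(p)$ (using once more that the frame is geodesic at $p$), and the second terms yield $-n\,\vec{H}(f)(p)$ after invoking the identity $n\vec{H}=\sum_{i=1}^{n}\sff(e_i,e_i)$ and the relation $g(\ov{\nabla}f,V)=V(f)$ valid for any vector $V\in T_pX$.

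There is no genuine obstacle here; the entire argument is a careful bookkeeping of the Gauss decomposition, and the only point requiring attention is a consistent convention for $\vec{H}$ (so that the trace of the second fundamental form is $n\vec{H}$ rather than $\vec{H}$ itself). Since $p\in M$ was arbitrary, the pointwise identity extends to all of $M$, which completes the proof.
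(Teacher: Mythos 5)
Your proof is correct, and it differs from the paper's in the way it organizes the computation. The paper takes an arbitrary local orthonormal frame $\{v_1,\ldots,v_n\}$ of $TM$ and decomposes the ambient gradient $\ov{\nabla}f = \nabla(f|_M) + \sum_j N_j(f)N_j$ into tangential and normal parts; the $-n\,\vec{H}(f)$ term then appears from $\sum_j N_j(f)\sum_i g(\ov{\nabla}_{v_i}N_j,v_i) = -n\,g(\vec{H},\ov{\nabla}f)$, and no geodesic-frame normalization is needed. You instead move the derivative off $\ov{\nabla}f$ by the Leibniz rule and invoke the Gauss formula to split $\ov{\nabla}_{e_i}e_i = \nabla_{e_i}e_i + \sff(e_i,e_i)$, which requires a frame geodesic at the point so that $\nabla_{e_i}e_i$ drops out; the trace of $\sff$ then produces $n\vec{H}$. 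Both routes are standard and equally short. The paper's version has the small advantage of being pointwise valid in any orthonormal frame without a normalization at $p$, while yours has the conceptual clarity of appealing directly to the Gauss formula and the intrinsic Laplacian in normal coordinates; either would be acceptable as the lemma's proof.
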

\begin{proof}
	Let $\{ v_1,\ldots ,v_n\} $ be a local orthonormal basis for  $TM$.
	\[
	\begin{array}{rcl}
		(\ov{\Delta}f)|_M&=& {\displaystyle \sum_{i=1}^{n}g(\ov{\nabla}_{v_i}\ov{\nabla}f,v_i)
			+\sum_{j=n+1}^m g(\ov{\nabla}_{N_j}\ov{\nabla}f,N_j)}
		\\
		&=& {\displaystyle
			\sum_{i=1}^{n}g(\ov{\nabla}_{v_i}(\nabla f+\sum_{j=n+1}^mN_j(f)N_j),v_i)
			+\sum_{j=n+1}^m g(\ov{\nabla}_{N_j}\ov{\nabla}f,N_j)}
		\\
		&=&{\displaystyle
			\sum_{i=1}^{n}g(\nabla_{v_i}\nabla f,v_i)
			+\sum_{j=n+1}^mN_j(f)\sum_{i=1}^{n}g(\ov{\nabla}_{v_i}N_j,v_i)
			+\sum_{j=n+1}^m g(\ov{\nabla}_{N_j}\ov{\nabla}f,N_j)}
		\\
		&=&{\displaystyle
			\Delta (f|_M)-n\, \vec{H}(f)+\sum_{j=n+1}^m g(\ov{\nabla}_{N_j}\ov{\nabla}f,N_j).}
	\end{array}
	\]
	\par
	\vspace{-1.2cm}
\end{proof}	

Given $a\in \R$, let $s_a(t)$ be the unique solution
of $x''(t)+a\, x(t)=0$, $x(0)=0$, $x'(0)=1$.
We will denote by $I_a$ the interval $[0,\pi/\sqrt{a})$
when $a>0$, and $I_a=[0,\infty)$ if $a\leq 0$.
Thus, $s_a(t)>0$ for all $t\in I_a\setminus \{ 0\} $.
Let $f_a\colon I_a\to \R$ be the smooth function given by
\begin{equation}
	\label{deff}
	f_a(t)=\frac{1}{t^2}\left(1-t\frac{s_{a}'(t)}{s_{a}(t)}\right) ,\ t\in I_a.
\end{equation}
A direct computation gives that
\begin{equation}\label{fa}
f_a(t)=\left\{ \begin{array}{cc}
	\frac{1}{t^2}\left(1-t\sqrt{a}\cot(\sqrt{a}t)\right) &  \mbox{if } a>0,
	\\
	0 & \mbox{if } a=0,
	\\
	\frac{1}{t^2}\left(1-t\sqrt{-a}\coth(\sqrt{-a}t)\right) &  \mbox{if } a<0.
\end{array}\right.
\end{equation}
The last equality implies that $f_a(t)$ is smooth at $t=0$, with value $f_a(0)=a/3$.

\begin{lemma}
	\label{lemma8.2}
	Let $R\colon B_X(x_0,R_1)\to [0,R_1)$ denote the extrinsic Riemannian distance function
	in $X$ to $x_0$.
	\begin{enumerate}
		\item The intrinsic Laplacian of the restriction of $R^2$ to $M$ is
		\[
		\Delta ((R^2)|_M)=2(m-1)R\, H^{S(R)}
		+2nR\, \vec{H}(R)+2|\nabla (R|_M)|^2-2R\sum_{j=n+1}^m\sff^{S(R)}(N_j^T,N_j^T),
		\]
		\newline
		where $H^{S(R)}$ denotes the mean curvature of
		the geodesic sphere $S(R)=\partial B_X(x_0,R)$
		with respect to the unit normal $-\ov{\nabla}R$,
		$N_j^T=N_j-N_j(R)\ov{\nabla}R$ is the projection of $N_j$ tangent to $S(R)$,
		and  $\sff^{S(R)}$ is the second fundamental form
		of $S(R)$ with respect to $-\ov{\nabla}R$.
		\item If the sectional curvature of $X$ satisfies
		$K_{\mbox{\rm \footnotesize sec}}\leq a$ for some $a\in \R$, then
		\begin{equation} \label{eq:LR2formula}
			\Delta ((R^2)|_M)\geq 2n+2nR\, \vec{H}(R) -2R^2f_a(R)\left( n-|\nabla (R|_M)|^2\right) ,
		\end{equation}
		and equality holds in (\ref{eq:LR2formula})
		if $K_{\mbox{\rm \footnotesize sec}}=a$. In particular if $X$ is flat, then
		\begin{equation} \label{eq:LR2formula3}
			\Delta ((R^2)|_M)=2n+2nR\, \vec{H}(R).
		\end{equation}
	\end{enumerate}
\end{lemma}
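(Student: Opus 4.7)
For part (1), the strategy is to apply Lemma~\ref{lemma8.1} to the function $f=R^2$ and compute each of the three terms separately. On the extrinsic side, using that $\ov{\nabla}(R^2)=2R\ov{\nabla}R$ and that $\ov{\nabla}R$ is unit, a direct divergence computation gives
\[
\ov{\Delta}(R^2)=2|\ov{\nabla}R|^2+2R\,\ov{\Delta}R=2+2R(m-1)H^{S(R)},
\]
where the identity $\ov{\Delta}R=(m-1)H^{S(R)}$ follows from writing the Hessian of $R$ in an orthonormal frame adapted to $\ov{\nabla}R$ and identifying its restriction to $TS(R)$ with $\sff^{S(R)}$ (with respect to $-\ov{\nabla}R$). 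The mean curvature term of Lemma~\ref{lemma8.1} is $n\vec{H}(R^2)=2nR\,\vec{H}(R)$. For the normal Hessian terms, the product rule gives
\[
g(\ov{\nabla}_{N_j}\ov{\nabla}(R^2),N_j)=2N_j(R)^2+2R\,\mathrm{Hess}(R)(N_j,N_j),
\]
and decomposing $N_j=N_j^T+N_j(R)\ov{\nabla}R$ together with the fact that $\mathrm{Hess}(R)(\ov{\nabla}R,\cdot)=0$ (since $|\ov{\nabla}R|^2\equiv 1$) reduces the Hessian term to $\sff^{S(R)}(N_j^T,N_j^T)$.

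To close the computation one uses the tangent--normal decomposition of $\ov{\nabla}R$ along $M$: $|\ov{\nabla}R|^2=1=|\nabla(R|_M)|^2+\sum_{j=n+1}^m N_j(R)^2$, so that $\sum_j N_j(R)^2=1-|\nabla(R|_M)|^2$. Assembling Lemma~\ref{lemma8.1} with these identifications and solving for $\Delta((R^2)|_M)$ cancels the constant term $2$ against $-\sum_j 2N_j(R)^2=-2+2|\nabla(R|_M)|^2$ and produces exactly the formula in part (1).

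For part (2), I would invoke the Hessian comparison theorem: if $K_{\mathrm{sec}}\leq a$, then on every vector $v\in TS(R)$ one has
\[
\sff^{S(R)}(v,v)\geq \tfrac{s_a'(R)}{s_a(R)}|v|^2,
\]
with equality when $K_{\mathrm{sec}}=a$. Summing this over the orthonormal frame of $TS(R)$ gives the standard estimate $(m-1)H^{S(R)}\geq (m-1)\tfrac{s_a'(R)}{s_a(R)}$, and applying it to each $N_j^T$ together with $|N_j^T|^2=1-N_j(R)^2$ yields
\[
\sum_{j=n+1}^{m}\sff^{S(R)}(N_j^T,N_j^T)\geq \tfrac{s_a'(R)}{s_a(R)}\bigl(m-n-1+|\nabla(R|_M)|^2\bigr).
\]
Plugging both bounds into part (1), the terms involving $m$ telescope, leaving a combination proportional to $n-|\nabla(R|_M)|^2$; the identity $R\,\tfrac{s_a'(R)}{s_a(R)}=1-R^2 f_a(R)$ (immediate from the definition (\ref{deff})) then rewrites this combination as $(2n-2|\nabla(R|_M)|^2)-2R^2 f_a(R)(n-|\nabla(R|_M)|^2)$, and the stray $2|\nabla(R|_M)|^2$ from part (1) cancels. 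This gives (\ref{eq:LR2formula}), and equality under $K_{\mathrm{sec}}=a$ is automatic because all comparison estimates are equalities in that case; the flat identity (\ref{eq:LR2formula3}) follows from $f_0\equiv 0$.

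The only real subtlety I expect is keeping the sign conventions straight: one must be careful that $\mathrm{Hess}(R)$ restricted to $TS(R)$ agrees with $\sff^{S(R)}$ taken with respect to $-\ov{\nabla}R$, and that the Hessian comparison inequality goes in the right direction under the hypothesis $K_{\mathrm{sec}}\leq a$ (here an upper curvature bound forces a \emph{lower} bound on the second fundamental form of geodesic spheres, which is what produces the minus sign in front of $f_a$ in (\ref{eq:LR2formula})). The remainder is bookkeeping.
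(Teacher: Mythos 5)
Part (1) of your proposal is correct and follows the same route as the paper: apply Lemma~\ref{lemma8.1} to $f=R^2$, compute $\ov{\Delta}(R^2)=2+2(m-1)RH^{S(R)}$, identify the normal Hessian terms with $\sff^{S(R)}(N_j^T,N_j^T)$ using $\ov{\nabla}^2R(\ov{\nabla}R,\cdot)=0$, and eliminate $\sum_j N_j(R)^2$ via $1=|\nabla(R|_M)|^2+\sum_j N_j(R)^2$.

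Part (2), however, has a genuine gap. You bound $H^{S(R)}$ and $\sum_j\sff^{S(R)}(N_j^T,N_j^T)$ \emph{both from below} by the Hessian comparison $\sff^{S(R)}(v,v)\geq \tfrac{s_a'(R)}{s_a(R)}|v|^2$, and then substitute both into the formula of part (1). But the second sum enters with a \emph{minus} sign, so a lower bound on it yields an \emph{upper} bound on $-2R\sum_j\sff^{S(R)}(N_j^T,N_j^T)$; the two replacements push the right-hand side in opposite directions and cannot be chained into a one-sided estimate. An upper bound on $\sff^{S(R)}$ would require a \emph{lower} curvature bound, which is not available. The fix, which is what the paper does, is to first combine the two offending terms before estimating. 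Writing $N_j^T=\sum_{i=1}^{m-1}a_{ij}e_i$ in the principal-curvature basis $\{e_i\}$ of $S(R)$ (with principal curvatures $\l_i$), one has
\[
2(m-1)R\,H^{S(R)}-2R\sum_{j=n+1}^{m}\sff^{S(R)}(N_j^T,N_j^T)
=2R\sum_{i=1}^{m-1}\l_i\Bigl(1-\sum_{j=n+1}^{m}a_{ij}^2\Bigr),
\]
and the crucial observation is that each coefficient $1-\sum_j a_{ij}^2=|e_i^{T,M}|^2\geq 0$ (Bessel's inequality applied to the decomposition $e_i=e_i^{T,M}+\sum_j a_{ij}N_j$). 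Only then can the termwise lower bound $\l_i\geq s_a'(R)/s_a(R)$ be applied without reversing the inequality; from there your telescoping algebra and the identity $R\,s_a'(R)/s_a(R)=1-R^2f_a(R)$ do close the argument, and the equality case is as you describe. Your concluding remark about sign conventions correctly flags where the subtlety is, but the real danger is not the direction of the Hessian comparison itself — it is that your two estimates must be merged before, not after, the comparison is invoked.
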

\begin{remark}
	{\rm
		For $a=0$, equation (\ref{eq:LR2formula3}) generalizes the well-known formula
		$\Delta ((R^2)|_M)=2n$ for minimal submanifolds
		of Euclidean space. Similarly, if we assume
		$K_{\mbox{\rm \footnotesize sec}}\leq 0$, inequality (\ref{eq:LR2formula})
		generalizes the inequality $\Delta ((R^2)|_M)\geq 2n$
		for minimal submanifolds given by Yau in~\cite[equation (7.1)]{yau6}.
	}
\end{remark}
\begin{proof}
	Lemma~\ref{lemma8.1} applied to $R^2$ gives
	\begin{equation}
		\label{ap30a}
		\Delta ((R^2)|_M)=(\ov{\Delta}(R^2))|_M+2nR\,
		\vec{H}(R)-\sum_{j=n+1}^mg(\ov{\nabla}_{N_j}\ov{\nabla}(R^2),N_j).
	\end{equation}
	We now compute the first and third terms of the last
	RHS. On the one hand, since $|\ov{\nabla}R|=1$,
	\begin{equation}
		\label{ap30b}
		\ov{\Delta}(R^2)=2+2R\,\ov{\Delta}R.
	\end{equation}
	As $\ov{\nabla}R$ is unitary and orthogonal to the geodesic
	spheres centered at $x_0$, we can take an orthonormal basis
	of $TX$ of the form $\{ E_1,\ldots,E_{m-1},\ov{\nabla}R \}$
	where $E_1,\ldots ,E_{m-1}$ is an orthonormal basis of the tangent
	space to $S(R)$. Thus,
	\[
	\ov{\Delta}R=\sum_{i=1}^{m-1}g(\ov{\nabla}_{E_i}\ov{\nabla}R,E_i)
	+g(\ov{\nabla}_{\ov{\nabla}R}\ov{\nabla}R,\ov{\nabla}R).
	\]
	
	The first term in the last RHS equals $(m-1)H^{S(R)}$,
	and the second term clearly vanishes. Thus,
	\begin{equation}
		\label{ap30c}
		\ov{\Delta}R=(m-1)H^{S(R)},
	\end{equation}
	and
	\begin{equation}
		\label{ap30ca}
		\ov{\Delta}(R^2)\stackrel{(\ref{ap30b})}{=}2+2(m-1)R\, H^{S(R)}.
	\end{equation}
	On the other hand,
	\begin{equation}
		\label{ap30cb}
		g(\ov{\nabla}_{N_j}\ov{\nabla}(R^2),N_j)=2g(\ov{\nabla}_{N_j}(R\ov{\nabla}R),N_j)
		=2N_j(R)^2+2R\, g(\ov{\nabla}_{N_j}\ov{\nabla}R,N_j).
	\end{equation}
	Decomposing $N_j=N_j^T+N_j(R)\ov{\nabla}R$ where $N_j^T$ is tangent to $S(R)$,
	the bilinearity of the second term of the last RHS with respect to $N_j$ allows us to write
	\begin{equation}
		\label{ap30cc}
		g(\ov{\nabla}_{N_j}\ov{\nabla}R,N_j)=g(\ov{\nabla}_{N_j^T}\ov{\nabla}R,N_j^T)
		=\sff^{S(R)}(N_j^T,N_j^T),
	\end{equation}
	where we have used that $g(\ov{\nabla}_{\ov{\nabla}R}\ov{\nabla}R,\ov{\nabla}R)=0$ and that
	$g(\ov{\nabla}_{N_j^T}\ov{\nabla}R,\ov{\nabla}R)=0$
	because $\ov{\nabla}R$ has constant length.
	From (\ref{ap30a}), (\ref{ap30ca}), (\ref{ap30cb})
	and (\ref{ap30cc}) we have
	\begin{equation}
		\label{ap30c'}
		\Delta ((R^2)|_M)=2+2(m-1)R\, H^{S(R)}+2nR\, \vec{H}(R)-2\sum_{j=n+1}^mN_j(R)^2
		-2R\sum_{j=n+1}^m\sff^{S(R)}(N_j^T,N_j^T).
	\end{equation}
	Since $\ov{\nabla}R=\nabla (R|_M)+\sum_jN_j(R)N_j$, then
	\begin{equation}
		\label{ap30c''}
		1=|\ov{\nabla}R|^2=|\nabla (R|_M)|^2+\sum_{j=n+1}^mN_j(R)^2.
	\end{equation}
	Plugging (\ref{ap30c''}) into (\ref{ap30c'}) we obtain item~1 of the lemma.
	
	As for item~2, we will assume that $K_{\mbox{\rm \footnotesize sec}}\leq a$
	for some $a\in \R$.
	Let $e_1,\ldots ,e_{m-1}$ be an orthonormal basis of principal directions
	of $T_xS(R)$, with respective principal curvatures
	$\l_1,\ldots ,\l_{m-1}$ with respect to the unit
	normal $-\ov{\nabla}R$ to  $S(R)$. For each $j=n+1,\ldots ,m$
	we can write $N_j^T=\sum_{i=1}^{m-1}a_{ij}e_i$ where $a_{ij}=g(e_i,N_j^T)=g(e_i,N_j)\in \R$.
	Thus,
	\[
	(m-1)H^{S(R)}=\sum_{i=1}^{m-1}\l_i\qquad \mbox{and}\qquad
	\sff^{S(R)}(N_j^T,N_j^T)= \sum_{i=1}^{m-1}\l_ia_{ij}^2.
	\]
	Hence, we can write the formula in item~1 of the lemma as
	\begin{equation}
		\label{ap30cd}
		\Delta ((R^2)|_M)=2R\sum_{i=1}^{m-1}\l_i\left( 1-\sum_{j=n+1}^ma_{ij}^2\right)
		+2nR\, \vec{H}(R)+2|\nabla (R|_M)|^2.
	\end{equation}
	
	Observe that given any tangent vector $v$ to $S(R)$,
	\begin{equation}
		\label{ap30ga}
		\sff^{S(R)}(v,v)=g(\ov{\nabla}_v\ov{\nabla}R,v)=(\ov{\nabla }^2R)(v,v),
	\end{equation}
	where $\ov{\nabla }^2R$ denotes the hessian of $R$.
	Since $K_{\mbox{\rm \footnotesize sec}}\leq a$,
	standard comparison results (see e.g.~\cite[Theorem 27]{pet1}) give
	\begin{equation}
		\label{ap30gb}
		\frac{s_{a}'(R)}{s_{a}(R)}\, g_R\leq \ov{\nabla}^2R,
	\end{equation}
	where $g_R$ is the induced metric by $g$ on $S(R)$.
	Evaluating (\ref{ap30gb}) at the principal directions $e_i$, we have
	\begin{equation}
		\label{ap30h}
		\frac{s_{a}'(R)}{s_{a}(R)} \leq \l_i,\quad
		\mbox{ for all }i=1,\ldots ,m-1.
	\end{equation}
	
	Given $i=1,\ldots ,m-1$, we decompose $e_i$ in its tangent and normal components to $M$ as
	\[
	e_i=e_i^{T,M}+\sum_{j=n+1}^mg(e_i,N_j)N_j=e_i^{T,M}+\sum_{j=n+1}^ma_{ij}N_j,
	\]
	from where
	\[
	1=|e_i|^2\geq \left| \sum_{j=n+1}^ma_{ij}N_j\right| ^2=\sum_{j=n+1}^ma_{ij}^2.
	\]
	This last inequality together with (\ref{ap30cd}) and (\ref{ap30h}), give
	\[
	\begin{array}{rcl}
		\Delta ((R^2)|_M)&\geq &{\displaystyle
			2R\frac{s_{a}'(R)}{s_{a}(R)}\sum_{i=1}^{m-1}\left( 1-\sum_{j=n+1}^ma_{ij}^2\right)
			+2nR\, \vec{H}(R)+2|\nabla (R|_M)|^2}
		\\
		&=&{\displaystyle
			2R\frac{s_{a}'(R)}{s_{a}(R)}\left( m-1-\sum_{j=n+1}^m|N_j^T|^2\right)
			+2nR\, \vec{H}(R)+2|\nabla (R|_M)|^2}
		\\
		&\stackrel{(\ref{deff})}{=}&{\displaystyle
			2\left( 1-R^2f_a(R)\right) \left( m-1-\sum_{j=n+1}^m|N_j^T|^2\right)
			+2nR\, \vec{H}(R)+2|\nabla (R|_M)|^2}
		\\
		&=&{\displaystyle
			2(m-1)-2\sum_{j=n+1}^m|N_j^T|^2-2R^2f_a(R)\left( m-1-\sum_{j=n+1}^m|N_j^T|^2\right)}
		\\
		& & {\displaystyle +2nR\, \vec{H}(R)+2|\nabla (R|_M)|^2}
		\\
		&\stackrel{(*)}{=}&{\displaystyle
			2n-2R^2f_a(R)\left( n-|\nabla (R|_M)|^2\right) +2nR\, \vec{H}(R),}
	\end{array}
	\]
	where in $(*)$ we have used that
	\[
	1-|\nabla (R|_M)|^2+\sum_{j=n+1}^m|N_j^T|^2
	\stackrel{(\ref{ap30c''})}{=}\sum_{j=n+1}^mN_j(R)^2
	+\sum_{j=n+1}^m|N_j^T|^2=\sum_{j=n+1}^m|N_j|^2=m-n.
	\]
	Now inequality (\ref{eq:LR2formula}) is proved.
	If  $K_{\mbox{\rm \footnotesize sec}}=a$, then
	both (\ref{ap30gb}) and (\ref{ap30h}) are equalities,
	and the above argument shows that (\ref{eq:LR2formula}) is also an equality.
	In the case $X$ is flat, then $a=0$ and
	$f_{0}(t)=0$, which gives (\ref{eq:LR2formula3}).
\end{proof}

The next result generalizes the classical monotonicity of area formula
of Allard~\cite[Section~5.1]{al1} for hypersurfaces of bounded mean curvature,
in part since it does not require the hypersurface to be proper in the
ambient space. Proposition~\ref{lemma8.4} is motivated by
the calculations in the last two pages of Yau~\cite{yau6}, where he derived the
lower bound area estimate given in \eqref{eq:lemma8.4} when $a\leq 0$, $H_0=0$.

\begin{proposition}[Intrinsic monotonicity of area formula]
	\label{lemma8.4}
	Let $\ov{B}_X(x_0,R_1)$ denote a closed geodesic ball in an
	$m$-dimensional manifold $(X,g)$, where $0<R_1\leq \Inj_X(x_0)$,
	and suppose that $K_{\mbox{\rm \footnotesize sec}}\leq a$ on $B_X(x_0,R_1)$
	for some $a\in \R$.
	Given $H_0\geq 0$, define
	\begin{equation} \label{eq:R0}
		R_0(a,H_0)=\left\{ \begin{array}{cl}
			\frac{1}{\sqrt{a}}\mbox{\rm arc cot}\left( \frac{H_0}{\sqrt{a}}\right) &  \mbox{if } a>0,
			\\
			1/H_0 & \mbox{if } a=0 \quad \mbox{(if $H_0=0$ we take $R_0(0,0)=\infty$)}
			\\
			\frac{1}{\sqrt{-a}}\mbox{\rm arc coth}\left( \frac{H_0}{\sqrt{-a}}\right), &  \mbox{if } a<0
			\quad \mbox{(if $\frac{H_0}{\sqrt{-a}}\geq 1$ we take $R_0(a,H_0)=\infty$),}
		\end{array}\right.
	\end{equation}
	and let $r_1=r_1(R_1,a,H_0)=\min \{ R_1,R_0(a,H_0)\}$.
	
	Suppose   $M$ is a complete, immersed, connected  $n$-dimensional submanifold of $X$
	and $x_0\in M$ is a point such that when $\partial M\neq \varnothing$,
	$d_M(x_0,\partial M)\geq R_1$ and the length of the mean curvature vector $\vec{H}$ of $M$
	restricted to $\ov{B}_X(x_0,R_1)$ is bounded from above by  $H_0$.
	Then:
	\ben
	\item If $M$ is compact without boundary, then  there exists $y\in M$
	such that the extrinsic distance from $x_0$ to $y$ is greater than or equal to $r_1$.
	\item The $n$-dimensional volume $A(r)$ of  $B_M(x_0,r)$ is
	a strictly increasing function of $r\in (0,r_1]$.
	\item For all $r\in (0,r_1]$ when $r_1\neq \infty$ or  otherwise, for all $r\in (0,\infty)$:
\begin{equation}
\label{eq:lemma8.4}
A(r)\geq \left\{ \begin{array}{cc}
\omega_n\, r^n e^{-nH_0r} & \mbox{ if $a\leq 0$},
\\
\omega_n\, r^n e^{-nr(H_0+\frac{1}{2}f_a(r_1)r)} & \mbox{ if $a>0$},
		\end{array}
		\right.
	\end{equation}
	\een
	where $\omega _n$ is the volume of the unit ball in $\R^n$ and the function $f_a$ is defined in~\eqref{fa}.
\end{proposition}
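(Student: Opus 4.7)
All three items follow from Lemma~\ref{lemma8.2}(2) applied to $R^2|_M$, combined with either the maximum principle (item~1) or the divergence theorem (items~2 and~3). For item~1, I argue by contradiction: suppose every point of $M$ has extrinsic distance strictly less than $r_1$ from $x_0$. Then $M\subset B_X(x_0,r_1)\subset B_X(x_0,R_1)$, so both the sectional-curvature and mean-curvature bounds hold throughout $M$, and $R|_M$ is smooth. Since $M$ is compact without boundary, $R|_M$ attains its maximum at some $y\in M$, where $\nabla(R|_M)(y)=0$ and $\Delta((R^2)|_M)(y)\le 0$. Substituting into~(\ref{eq:LR2formula}) and using $\vec{H}(R)\ge -|\vec{H}|\ge -H_0$ yields $1\le R(y)H_0+R(y)^2 f_a(R(y))$, which by the identity $t^2 f_a(t)=1-t\,s_a'(t)/s_a(t)$ rewrites as $s_a'(R(y))/s_a(R(y))\le H_0$. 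Since $t\mapsto s_a'(t)/s_a(t)$ is strictly decreasing on $I_a$, the definition of $R_0(a,H_0)$ in~(\ref{eq:R0}) forces $R(y)\ge R_0(a,H_0)\ge r_1$, contradicting $R(y)<r_1$.

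For items~2 and~3, let $A(r)$ denote the $n$-volume of $B_M(x_0,r)$ and $L(r)$ the $(n-1)$-volume of its boundary. The hypothesis $d_M(x_0,\partial M)\ge R_1\ge r_1$ (or $\partial M=\varnothing$), together with completeness of $M$, ensures $\overline{B_M(x_0,r)}$ is compactly contained in $M$ for $r\le r_1$, and the coarea formula for the $1$-Lipschitz function $d_M(x_0,\cdot)$ gives $A'(r)=L(r)$ a.e. Applying the divergence theorem to $R^2|_M$ on $B_M(x_0,r)$, the boundary integral is at most $2rL(r)$ (using $|\nabla(R|_M)|\le 1$ and $R\le d_M\le r$ on $\partial B_M(x_0,r)$), while the interior integral is bounded below using~(\ref{eq:LR2formula}), $\vec{H}(R)\ge -H_0$, and the fact that $R^2 f_a(R)=1-R\,s_a'(R)/s_a(R)$ is non-decreasing in $R$ when $a>0$ (so $R^2 f_a(R)\le r^2 f_a(r)$ on $B_M(x_0,r)$; when $a\le 0$, $f_a\le 0$ and the relevant term in~(\ref{eq:LR2formula}) already has the correct sign). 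This yields the differential inequality
\[
(\log A)'(r)\ge \frac{n}{r}-nH_0-nrf_a(r)_+\quad\text{a.e.\ on }(0,r_1],
\]
where $f_a(r)_+:=\max\{f_a(r),0\}$.

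Integrating from $\epsilon$ to $r$, letting $\epsilon\to 0^+$ and using the standard asymptotic $A(\epsilon)/\epsilon^n\to\omega_n$, and (for $a>0$) bounding $\int_0^r sf_a(s)\,ds\le\tfrac{1}{2}r^2 f_a(r_1)$ via the monotonicity of $f_a$ on $[0,r_1]$, yields both cases of~(\ref{eq:lemma8.4}). Strict monotonicity (item~2) follows because the coefficient $n/r-nH_0-nrf_a(r)_+$ is strictly positive on $(0,r_1)$---this is precisely the condition $s_a'(r)/s_a(r)>H_0$ for $r<R_0(a,H_0)$, via the strict monotonicity of $s_a'/s_a$---so $L(r)>0$ a.e.\ on $(0,r_1)$ and $A$ strictly increases on $(0,r_1]$. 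The chief technical subtlety is justifying the divergence theorem across the cut locus of $d_M(x_0,\cdot)$, handled by approximating $B_M(x_0,r)$ with smooth subdomains or by a direct appeal to coarea, exploiting that the cut locus has $n$-measure zero. A minor ancillary point is the monotonicity of $f_a$ for $a>0$, which follows from Hessian comparison, or equivalently from the power series $f_a(t)=\frac{a}{3}+\frac{a^2}{45}t^2+\cdots$ having non-negative coefficients.
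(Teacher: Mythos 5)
Your arguments for items~1 and~3 are correct. For item~1 you replace the paper's geometric argument (at the outermost contact point of $M$ with a geodesic sphere, all normal curvatures of $M$ exceed $H_0$) by a maximum-principle argument: at the maximum $y$ of $R|_M$ one has $\nabla(R|_M)(y)=0$ and $\Delta((R^2)|_M)(y)\le 0$, and combining with~\eqref{eq:LR2formula} and the identity $t^2 f_a(t)=1-t\,s_a'(t)/s_a(t)$ gives $s_a'(R(y))/s_a(R(y))\le H_0$, hence $R(y)\ge R_0(a,H_0)$. This is a legitimate, self-contained alternative that uses only Lemma~\ref{lemma8.2}, and is in some ways cleaner than invoking separate comparison results for sphere normal curvatures. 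Your item~3 is essentially the paper's proof: integrate~\eqref{eq:LR2formula} via Stokes, bound the boundary term by $2rA'(r)$, bound the $f_a$-term using monotonicity of $t\mapsto t^2f_a(t)$ (for $a>0$), and integrate the resulting ODE inequality from~$0$.

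However, your proof of item~2 has a genuine gap in the case $a<0$. You claim that the coefficient $\tfrac{n}{r}-nH_0-nrf_a(r)_+$ is strictly positive on $(0,r_1)$ and that this ``is precisely the condition $s_a'(r)/s_a(r)>H_0$.'' That equivalence holds when $a\ge 0$, because then $\tfrac{1}{r}-H_0-rf_a(r)=\tfrac{s_a'(r)}{s_a(r)}-H_0$. But for $a<0$ you replaced $f_a$ by $f_a(r)_+=0$, so the coefficient becomes $\tfrac{n}{r}-nH_0$, which is positive only for $r<1/H_0$. Since for $a<0$ and $H_0>\sqrt{-a}$ one has $R_0(a,H_0)=\tfrac{1}{\sqrt{-a}}\operatorname{arccoth}\!\left(\tfrac{H_0}{\sqrt{-a}}\right)>\tfrac{1}{H_0}$ (because $\operatorname{arccoth}(x)>1/x$ for $x>1$), the interval $(1/H_0,r_1)$ can be nonempty when $R_1>1/H_0$, and there your differential inequality gives no information about the sign of $A'$. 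By discarding the favorable contribution from the term $-R^2 f_a(R)(n-|\nabla(R|_M)|^2)$ (which is nonnegative when $a\le 0$) you have weakened the bound past the point of usefulness, and keeping it does not easily repair the argument either (the needed inequalities then point the wrong way). The paper avoids this entirely: it proves item~2 by the elementary observation that for $0\le r_2<r_3\le r_1$ the open annulus $B_M(x_0,r_3)\setminus \ov{B}_M(x_0,r_2)$ is nonempty --- item~1 (if $M$ is closed) or the hypothesis $d_M(x_0,\partial M)\ge R_1$ together with completeness guarantees a point of $M$ at intrinsic distance $\ge r_3$ from $x_0$, and a minimizing geodesic to it meets the annulus --- hence $A(r_2)<A(r_3)$. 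You should use this argument (or something equivalent) for item~2 rather than the differential inequality.
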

\begin{proof}
Let $\M^m(a)$ denote the $m$-dimensional, simply-connected space form
of constant sectional curvature $a\in \R$. Recall that the number $R_0(a,H_0)$ represents
the radius of a geodesic sphere in $\M^m(a)$ with constant mean curvature $H_0$, and
that geodesic spheres in $\M^m(a)$ of radii less than $R_0(a,H_0)$ have mean curvature greater than $H_0$.
	
We first prove item~1 of the lemma.
Fix a point $x_0\in M$ and let $r\in (0,r_1)$.
Suppose $M$ is compact with empty boundary and suppose
that  there does not exist a point $y\in M$ such that
the extrinsic distance from $x_0$ to $y$ is greater than $r_1$; 
in this case we have $M\subset \ov{B}_X(x_0,r_1)$.
Since $r_1\leq R_1\leq \Inj_X(x_0)$, all the distance spheres
$\partial B_X(x_0,r)$ with $r\in (0,r_1)$ are geodesic spheres.
Since the absolute sectional curvature of $X$
is bounded by $a$, comparison results imply that
$\partial B_X(x_0,r)$ has normal curvatures greater than $H_0$ because $r<R_0(a,H_0)$
in this case.
	Assume for the moment that $M$ is contained in $\ov{B}_X(x_0,r)$. As $M$
	is closed, there exists a largest $r_2\in (0,r]$ such that
	$M\subset \overline{B}_X(x_0,r_2)$, and by compactness of $M$ there exists
	a point  $x\in
	M\cap \partial B_X(x_0,r_2)$. Therefore, all the normal
	curvatures of $M$ at $x$ are greater than $H_0$, which implies that
	the length of the mean curvature vector of $M$ is greater than $H_0$,
	thereby contradicting one of the hypotheses
	on $M$.
	This contradiction proves that $M(x_0)$ cannot be contained in
	$\overline{B}_X(x_0,r)$. Since this non-containment equation holds for every $r\in (0,r_1)$
	and $M$ is compact, we conclude that $M$ cannot be contained in
	$B_X(x_0,r_1)$. 
	Item~1 is now proved.
	
To see that item~2 holds, consider two  values $r_2<r_3$ in $[0,r_1]$.
By item~1 and the hypotheses on $M$, then $B_M(x_0,r_3)\setminus \overline{B}_M(x_0,r_3)$
is a non-empty open subset of $M$, hence $A(r_2)<A(r_3)$.

	It remains to prove the lower bound estimates for $A(r) $ given in item~3.
	In what follows, we only consider values $r\in (0,r_1]$.
	By Stokes' Theorem,
	\begin{equation}
		\label{ap30f}
		\int_{B_M(x_0,r)}\Delta ((R^2)|_M)\leq \int_{\partial B_M(x_0,r)}
		|\nabla(R^2)|
		=2\int_{\partial B_M(x_0,r)}R|\nabla R|
		\leq 2r\, l(r),
	\end{equation}
	where $l(r)=\mbox{Volume}(\partial B_M(x_0,r))=A'(r)$ is the
	$(n-1)$-dimensional volume of $\partial B_M(x_0,r)$.
	
	Since $ K_{\mbox{\rm \footnotesize sec}}\leq a$,
	inequality~\eqref{eq:LR2formula}
	implies that
	\begin{equation}
		\label{ap30fa}
		\int_{B_M(x_0,r)}\Delta ((R^2)|_M)\geq 2n\, A(r)
		+2n \int_{B_M(x_0,r)}R\, \vec{H}(R)
		-2\int_{B_M(x_0,r)}R^2f_a(R)\left( n-|\nabla (R|_M)|^2\right) .
	\end{equation}
	Since $R\leq r$ and $|\vec{H}(R)|=|\langle \vec{H},\ov{\nabla}R\rangle |
	\leq |\vec{H}|\leq H_0$, we have $R\, \vec{H}(R)\geq -H_0r$, and thus,
	\begin{equation}
		\label{ap30fb}
		\int_{B_M(x_0,r)}R\, \vec{H}(R)\geq -H_0r\, A(r).
	\end{equation}
	
	Next we analyze the second integral in the RHS of (\ref{ap30fa}).
	
	If $a=0$, then $f_a\equiv 0$ and the second integral in the RHS of
	(\ref{ap30fa}) vanishes. In this case, \eqref{ap30f}, \eqref{ap30fa}
	and~\eqref{ap30fb} give
	\begin{equation}\label{ap30f'}
		2r\, A'(r)=2r\, l(r)\geq \int_{B_M(x_0,r)}\Delta ((R^2)|_M)\geq
		2n\, A(r)
		-2n H_0r\, A(r),
	\end{equation}
	that is,
	\[
	\frac{A'(r)}{A(r)}\geq \frac{n}{r}-nH_0 \qquad \forall r\in (0,r_1],
	\]
	which implies that
	\[
	\frac{d}{dr}\left( \frac{A(r)}{r^n e^{-nH_0r}}\right) \geq 0,
	\]
	hence the function
	\[
	r\mapsto \frac{A(r)}{r^n e^{-nH_0r}}
	\]
	is non-decreasing for $r\in (0,r_1]$.
	Since the limit as $r\to 0^+$ of this last function
	is $\omega_n$, we deduce:
	\[
	\mbox{If $a=0$, then } A(r)\geq \omega_nr^n e^{-nH_0r} \quad \mbox{for all }r\in (0,r_1].
	\]
	In fact, the last estimate holds if $a\leq 0$, because $f_a\leq 0$ in
	$(0,\infty)$ in this case, and hence,
	\begin{equation}\label{4.22}
		-2\int_{B_M(x_0,r)}R^2f_a(R)\left( n-|\nabla (R|_M)|^2\right) \geq 0
	\end{equation}
	so the same computations of case $a=0$ are valid for $a\leq 0$.

We next study the case $a>0$. Now $f_a(t)$ is strictly positive,
increasing in the interval $I_a=[0,\pi/\sqrt{a})$ and limits
to $a/3>0$ as $t\to 0^+$ and to $+\infty$ as $t\to (\pi/\sqrt{a})^-$.
Whenever $r\in (0,r_1]$,
	\begin{eqnarray}
		2r\, A'(r)&\stackrel{\eqref{ap30f}}{\geq }&
		\int_{B_M(x_0,r)}\Delta ((R^2)|_M)
		\nonumber
		\\
		&\stackrel{\eqref{ap30fa},\eqref{ap30fb}}{\geq }&
		2n\, A(r)-2n H_0r\, A(r)-2 \int_{B_M(x_0,r)}
		R^2f_a(R)\left( n-|\nabla (R|_M)|^2\right) \nonumber
		\\
		&\stackrel{(A)}{\geq }&
		2n\left[ 1-r^2f_a(r_1)\right] A(r)-2n rH_0\, A(r)\label{4.23}
	\end{eqnarray}
where in $(A)$ we have bounded $R\leq r$,
$f_a(R)\leq f_a(r_1)$ and $ n-|\nabla (R|_M)|^2\leq n$.
	
Finally, \eqref{4.23} implies
\[
\frac{d}{dr}\left( \frac{A(r)}{r^n e^{-nr(H_0+\frac{1}{2}f_a(r_1)r)}}\right) \geq 0;
\]
hence the function
\[
r\mapsto \frac{A(r)}{r^n e^{-nr(H_0+\frac{1}{2}f_a(r_1)r)}}
\]
is non-decreasing for $r\in  [0,r_1]$.
Since the limit as $r\to 0^+$ of this last function
is $\omega_n$, we deduce the
inequality (\ref{eq:lemma8.4}) in the case where $a>0$; thus, the proposition is proved.
\end{proof}

\begin{remark}
\label{remark8.5}
{\em 
\begin{enumerate}
\item Proposition~\ref{lemma8.4} holds regardless whether or not the normal bundle of
the submanifold $M$ is trivial, since item~2 of Lemma~\ref{lemma8.2}
does not depend on whether or not the normal bundle of $M$ admits a global
trivialization.
\item The proof of Proposition~\ref{lemma8.4} shows that if $M$ has local density $k\in \N$ at $x_0$, 
then the RHS in~\eqref{eq:lemma8.4} cab replaced by $k$ times the same expression.
\item In the case $a>0$, it holds that $A(r)\geq \omega_n\, r^n e^{-nr(H_0+\frac{1}{2}f_a(r)r)}$
for every $r\in (0,r_1]$. This can be proved by following the same proof for values $r\leq r_1'$ where
$r_1'$ is any number less than or equal to $r_1$.
\item If $H_0\neq 0$ or $a\neq 0$, the inequality~\eqref{eq:lemma8.4} is strict.
\end{enumerate}
	
	}
\end{remark}

\begin{corollary}
	\label{corol11.6}
	Let $R_1>0$, $a\in \R$ and $H_0\geq 0$,	and suppose  that $X$
	is a complete Riemannian $m$-dimensional manifold with
	injectivity radius at least $R_1>0$ and  $K_{sec}\leq a$.
	If $M\la X$ is a complete, non-compact immersed
	$n$-dimensional submanifold with empty boundary and the mean curvature
	vector $\vec{H}$ of $M$ satisfies $|\vec{H}|\leq H_0$, then $M$ has infinite
	volume.
\end{corollary}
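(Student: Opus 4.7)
The plan is to combine the uniform lower bound on the volume of intrinsic balls supplied by Proposition~\ref{lemma8.4} with a packing argument along a minimizing geodesic ray in $M$.

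First, I would set $r_1 := \min\{R_1, R_0(a, H_0)\} \in (0, R_1]$; the positivity of $R_0(a, H_0)$ in each case of \eqref{eq:R0} ensures $r_1 > 0$. For an arbitrary point $x \in M$, the hypotheses of Proposition~\ref{lemma8.4} hold at the base point $x_0 = x$: one has $\Inj_X(x) \geq R_1 \geq r_1$, the sectional curvature of $X$ is bounded above by $a$ on $B_X(x, R_1)$, the length of $\vec{H}$ is bounded by $H_0$ along $M$, and the boundary condition $d_M(x, \partial M) \geq R_1$ is vacuous since $\partial M = \varnothing$. Applying Proposition~\ref{lemma8.4} at $r = r_1$ therefore yields a positive constant
\[
C \;=\; C(n, a, H_0, r_1) \;>\; 0,
\]
independent of the base point $x$, with $A_x(r_1) := \mathrm{vol}(B_M(x, r_1)) \geq C$; the explicit form of $C$ is read off from the right-hand side of \eqref{eq:lemma8.4}.

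Next, I would reduce to the connected case. If $M$ has a non-compact connected component, replace $M$ by that component. Otherwise, every component of $M$ is compact and the non-compactness of $M$ forces infinitely many components; picking one point from each component produces an infinite collection of base points at pairwise infinite intrinsic distance, so the corresponding intrinsic balls of radius $r_1$ are disjoint and the volume is already infinite.

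Assuming then that $M$ is connected, complete and non-compact, Hopf--Rinow supplies a minimizing geodesic ray $\gamma : [0, \infty) \to M$ from some $x_0 \in M$. Setting $x_k := \gamma(3 r_1 k)$ for $k = 0, 1, 2, \ldots$, the global minimality of $\gamma$ gives $d_M(x_k, x_j) = 3 r_1 |k - j| > 2 r_1$ whenever $k \neq j$, so the intrinsic balls $B_M(x_k, r_1)$ are pairwise disjoint subsets of $M$. Summing the uniform lower bound yields
\[
\mathrm{vol}(M) \;\geq\; \sum_{k=0}^{\infty} \mathrm{vol}(B_M(x_k, r_1)) \;\geq\; \sum_{k=0}^{\infty} C \;=\; +\infty,
\]
as desired. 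Since the analytic heavy lifting is already done in Proposition~\ref{lemma8.4}, the only real subtlety is verifying the uniformity of $C$ across base points, which is immediate from the fact that the right-hand side of \eqref{eq:lemma8.4} depends only on $n, a, H_0, r_1$, none of which varies with $x$.
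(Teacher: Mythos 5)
Your proposal is correct and follows essentially the same strategy as the paper's proof: apply Proposition~\ref{lemma8.4} to obtain a uniform positive lower bound for the volume of intrinsic $r_1$-balls, dispose of the case of infinitely many (compact) components, and otherwise pack disjoint intrinsic balls along a minimizing geodesic ray in a non-compact component. The only cosmetic difference is that the paper spaces the centers at $2kr_1$ rather than your $3kr_1$; both spacings make the open balls pairwise disjoint, so this changes nothing of substance.
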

\begin{proof}
Let $r_1>0$ be the number given by Proposition~\ref{lemma8.4}. 	
Observe that by Proposition~\ref{lemma8.4}, the $n$-dimensional volume of each component of $M$ 
is at least $A(r_1)>0$
Therefore, if $M$ has infinitely many components, then $M$ has infinite volume. 
So assume that $M$ has a finite number of components. Since $M$ is non-compact, then we can replace $M$ 
by a non-compact component. Take a point $x_0\in M$ and let $\g\colon [0,\infty)\to M$ a 
length-minimizing ray starting at $x_0$ and parameterized by arc length. Consider the pairwise disjoint intrinsic balls
$B_M(\g(2kr_1),r_1)$, $k\in \N$. Since each of these balls has volume at least $A(r_1)$
by Proposition~\ref{lemma8.4}, then we conclude that $M$ has infinite volume.
\end{proof}

\begin{proposition}
	\label{yau}
	Given $R_1>0$, $a\in \R$ and $H_0\geq 0$, there exists
	$r_2=r_2(R_1,a,H_0)\in (0,r_1]$
	(here $r_1$ is given by Proposition~\ref{lemma8.4})
	such that if $X$ is a complete Riemannian 3-manifold with
	injectivity radius at least $R_1>0$ and
	$K_{sec}\leq a$, and if $M\la X$ is
	a complete, connected immersed surface with boundary,
	whose mean curvature vector $\vec{H}$ satisfies
	$|\vec{H}|\leq H_0$, then for all $p\in \Int(M)$ we have
	\begin{equation} \label{yaulemma0}
		\mathrm{Area}[B_M(p,r)]\geq 3 r^{2}, \quad
		\mbox{whenever $0<r\leq \min \{r_2,d_M(p,\partial M)\} $.}
	\end{equation}
	
	Furthermore, given $\ve_0>0$ define $\ds C_A
	=\min \{ \ve_0,\frac{r_2^2}{\ve_0}\} $. If $p\in M$ satisfies
	$d_M(p,\partial M)\geq \ve_0$, then
	\begin{equation}
		\label{yaulemma2}
		\mathrm{Area}[B_M(p,d_M(p,\partial M))]\geq C_A\, d_M(p,\partial M)
	\end{equation}
	and
	\begin{equation}
		\label{yaulemma1}
		\mathrm{Area}[B_M(p,\ve_0)]\geq C_A\, \ve_0,
	\end{equation}
\end{proposition}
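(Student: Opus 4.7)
The plan is to specialize Proposition~\ref{lemma8.4} to $n=2, m=3$, choose $r_2$ so that the exponential prefactor in the resulting area bound is close enough to $1$ that $\mathrm{Area}(B_M(p,r))\geq 3r^2$ holds, and then promote this quadratic estimate to the required linear estimate on scales larger than $r_2$ by a chaining argument along a minimizing geodesic to $\partial M$.

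For the choice of $r_2$ and the proof of \eqref{yaulemma0}: Proposition~\ref{lemma8.4} with $n=2$ (and Remark~\ref{remark8.5}(3) when $a>0$) gives $\mathrm{Area}(B_M(p,r))\geq \pi r^2\,\phi(r)$ for $0<r\leq\min\{r_1,d_M(p,\partial M)\}$, where $\phi$ is a monotone decreasing factor tending to $1$ as $r\to 0^+$ (explicitly $\phi(r)=e^{-2H_0r}$ if $a\leq 0$ and $\phi(r)=e^{-2r(H_0+\frac{1}{2}f_a(r)r)}$ if $a>0$). Since $\pi>3$, I pick $r_2=r_2(R_1,a,H_0)\in(0,r_1]$ with $\phi(r_2)\geq 3/\pi$, which immediately yields \eqref{yaulemma0}.

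Inequality \eqref{yaulemma1} is then immediate by taking $r=\min\{r_2,\varepsilon_0\}$ in \eqref{yaulemma0}, since $\min\{r_2,\varepsilon_0\}^2=\min\{r_2^2,\varepsilon_0^2\}=C_A\varepsilon_0$. For \eqref{yaulemma2}, writing $d=d_M(p,\partial M)$, the elementary bounds $C_A\leq\varepsilon_0$ and $C_A\leq\sqrt{\varepsilon_0\cdot r_2^2/\varepsilon_0}=r_2$ allow me to handle the ranges $d\leq r_2$ and $r_2<d\leq 3r_2$ by a direct application of \eqref{yaulemma0}: in the first range $\mathrm{Area}(B_M(p,d))\geq 3d^2\geq 3\varepsilon_0 d\geq C_A d$; in the second, since $B_M(p,r_2)\subset B_M(p,d)$, $\mathrm{Area}(B_M(p,d))\geq 3r_2^2\geq r_2 d\geq C_A d$.

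The main obstacle, and the only case requiring genuinely new input, is $d>3r_2$, where a single application of \eqref{yaulemma0} cannot deliver a bound linear in $d$. Here I chain disjoint balls: take a unit-speed minimizing geodesic $\gamma\colon[0,d]\to M$ with $\gamma(0)=p$ and $\gamma(d)\in\partial M$, so that $d_M(\gamma(s),\partial M)=d-s$ by the triangle inequality, and place centers $q_k=\gamma(2kr_2)$ for $k=0,1,\ldots,K=\lfloor(d-r_2)/(2r_2)\rfloor$. For each such $k$, the ball $B_M(q_k,r_2)$ lies in $B_M(p,2kr_2+r_2)\subset B_M(p,d)$, satisfies $d_M(q_k,\partial M)\geq r_2$ so \eqref{yaulemma0} applies and gives $\mathrm{Area}(B_M(q_k,r_2))\geq 3r_2^2$; moreover these balls are pairwise disjoint, since consecutive centers lie at intrinsic distance exactly $2r_2$ (sub-arcs of a minimizing geodesic are minimizing). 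Summing and using $K+1\geq (d-r_2)/(2r_2)\geq d/(3r_2)$, which follows from $d>3r_2\Rightarrow d-r_2>2d/3$, yields $\mathrm{Area}(B_M(p,d))\geq r_2 d\geq C_A d$, completing the proof.
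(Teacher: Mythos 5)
Your proof is correct and takes essentially the same approach as the paper: choose $r_2$ so that the exponential factor in Proposition~\ref{lemma8.4} keeps the quadratic bound above $3r^2$, then chain pairwise disjoint intrinsic balls along a minimizing geodesic from $p$ to $\partial M$ to upgrade the quadratic estimate to a linear one. The only difference is in the bookkeeping of the chaining step: the paper places balls of radius $\varepsilon_0$ at the points $\gamma(2(i-1)\varepsilon_0)$ and then splits into the cases $\varepsilon_0 \leq r_2$ and $\varepsilon_0 > r_2$ (bounding each ball's area by $3\varepsilon_0^2$ or $3r_2^2$ accordingly), whereas you chain balls of radius $r_2$ at the points $\gamma(2kr_2)$ and instead split on the size of $d = d_M(p,\partial M)$ relative to $r_2$, handling $d \leq 3r_2$ directly and chaining only when $d > 3r_2$. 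Both organizations recover the constant $C_A = \min\{\varepsilon_0, r_2^2/\varepsilon_0\}$; yours does so via the observation $C_A \leq r_2$ combined with the bound $\mathrm{Area}(B_M(p,d)) \geq r_2\,d$, which is arguably a cleaner way to see why $C_A$ has the form it does. One tiny point: in your one-line derivation of~\eqref{yaulemma1} you should note explicitly that $\mathrm{Area}(B_M(p,\varepsilon_0)) \geq \mathrm{Area}(B_M(p,\min\{r_2,\varepsilon_0\}))$, though this monotone containment is of course immediate.
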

\begin{proof}
First suppose that $a>0$. By~\eqref{eq:lemma8.4}, we have that
whenever $0<r\leq \min \{r_1,d_M(p,\partial M)\} $,
\begin{equation}
\label{12.28}
\mathrm{Area}[B_M(p,r)]\geq \pi \, r^2 e^{-2r(H_0+\frac{1}{2}f_a(r_1)r)}
=\phi(r)r^2,	
\end{equation}
where $\phi(r)=\pi  e^{-2r(H_0+\frac{1}{2}f_a(r_1)r)}$
for all $r>0$. Choose $r_2=r_2(R_1,a,H_0)\in (0,r_1]$ such that
$\phi(r_2)\geq 3$, which can be done since $\phi$ is
continuous and $\phi(0)=\pi$. As $r>0\mapsto \phi(r)$ is decreasing, we have that if
$0<r\leq (0,\min \{r_2,d_M(p,\partial M)\} $, then
\[
\mathrm{Area}[B_M(p,r)]\stackrel{\eqref{12.28}}{\geq}
\phi(r)r^2\geq \phi(r_2)r^2\geq 3r^2,
\]
which proves~\eqref{yaulemma0} assuming $a>0$. The proof of~\eqref{yaulemma0} when $a\leq 0$
is similar and we leave it for the reader.
	
	Next assume that $p\in M$ satisfies $d_M(p,\partial M)\geq \ve_0$,
	and we will show that \eqref{yaulemma2} and \eqref{yaulemma1} hold.
	Let $\g\colon [0,d_M(p,\partial M))\to M$ be a minimizing geodesic
	from $p$ to $\partial M$, parameterized by
	arc length.
	Choose the largest $k\in \N$ such that
	\begin{equation}
		(2k-1)\ve_0\leq  d_M(p,\partial M)<(2k+1)\ve_0\leq3k\ve_0.
		\label{12.28a}
	\end{equation}
	By the triangle inequality, the collection  $\cB=\{B_M(\g(2(i-1)\ve_0),\ve_0)\}_{i=1}^{k}$
	is pairwise disjoint and $\cup \cB$ is contained in $B_M(p,d_M(p,\partial M))$; hence,
	\begin{equation}
		\mathrm{Area}[B_M(p,d_M(p,\partial M))]\geq
		\sum_{i=1}^{k}\mbox{\rm Area}\left( B_M(\g(2(i-1)\ve_0),\ve_0)\right) .
		\label{12.28c}
	\end{equation}
	
	Also observe that given $i\in \{ 1,\ldots ,k\}$, \eqref{12.28a} implies
	\begin{equation}
		\ve_0\leq d_M(\g(2(i-1)\ve_0),\partial M).
		\label{12.28d}
	\end{equation}
	We next prove~\eqref{yaulemma2}
	and~\eqref{yaulemma1} by consideration of two
	cases.
	\begin{itemize}
		\item \underline{Suppose $\ve_0\leq r_2$.}
		By \eqref{12.28d}, for each $i\in \{ 1,\ldots ,k\}$ we have
		\[
		\ve_0\leq \min \{r_2,d_M(\g(2(i-1)\ve_0),\partial M)\} .
		\]
		The last inequality allows us to use~\eqref{yaulemma0} to conclude that
		\begin{equation}
			\mathrm{Area}[B_M(\g(2(i-1)\ve_0),\ve_0)]\geq 3 \ve_0^{2}.	\label{12.28b}
		\end{equation}
		Note that $C_A=\ve_0$ in this case. Taking $i=1$
		in~\eqref{12.28b}, we have
		$\mathrm{Area}[B_M(p,\ve_0)]\geq 3\ve_0^{2}>\ve_0^2=C_A\ve_0$,
		hence \eqref{yaulemma1} holds.
		As the collection $\cB$ is pairwise disjoint,
		\eqref{12.28c} and~\eqref{12.28b} imply
		\[
		\mathrm{Area}[B_M(p,d_M(p,\partial M))]\geq  3k\ve_0^2
		=3kC_A\ve_0\stackrel{\eqref{12.28a}}{\geq} C_A\, d_M(p,\partial M),
		\]
		hence~\eqref{yaulemma2} also holds in this case.
		\item \underline{Suppose $\ve_0>r_2$.}
		By \eqref{12.28d}, for each $i\in \{ 1,\ldots ,k\}$ we have
		$r_2<d_M(\g(2(i-1)\ve_0),\partial M)$;
		hence~\eqref{yaulemma0} implies that
		\begin{equation}
			\mathrm{Area}[B_M(\g(2(i-1)\ve_0),\ve_0)]\geq 3r_2^2.	
			\label{12.28e}
		\end{equation}
		Since $ d_M(p,\partial M)<3k\ve_0$ and $C_A= \frac{r_2^2}{\ve_0}$ in this case,
		\[
		\mathrm{Area}[B_M(p,d_M(p,\partial M))]\geq 3k r_2^2
		=3kC_A \ve_0\stackrel{\eqref{12.28a}}{>} C_A d_M(p,\partial M),
		\]
		which proves that \eqref{yaulemma2}. The inequality
		\eqref{yaulemma1} follows from~\eqref{yaulemma2}
		after replacing $M$ by the closure of $B_M(p,\ve_0)$.
	\end{itemize}
	\par
	\vspace{-0.7cm}
\end{proof}

\begin{remark} \label{remark8.8}
	{\em
		A straightforward adaptation of the proof of Proposition~\ref{yau}
		gives a related statement and proof for any $n$-dimensional
		submanifold $M$, with a fixed upper bound on the length of its mean
		curvature vector field, in a Riemannian $m$-manifold $X$
		which has injectivity radius at least $R_1>0$ and
		sectional curvature bounded from above by some $a\in \R$; in this setting,
		$3r^2$ in~\eqref{yaulemma0} is
		replaced by $c_nr^n$, where $c_n$ is any positive number less
		than $\omega_n$.
	} 
\end{remark}

\section{Index of finitely branched minimal surfaces in $\R^3$}
\label{sec3}
\begin{definition}
{\rm
Let $\Sigma$ be a smooth surface endowed with a conformal class of metrics.
We say that a harmonic map $f\colon \Sigma\to \R^3$
is a (possibly non-orientable) branched minimal surface if it
is a conformal immersion outside of a locally finite
set of points $\cB_{\Sigma}\subset \Sigma$, where $f$ fails
to be an immersion.
Points in $\cB_{\Sigma}$ are called branch points of $f$.
It is well-known (see e.g. Micallef and
White~\cite[Theorem~1.4]{miwh1}) that given $p\in
\cB_{\Sigma}$, there exist a conformal coordinate $(\ov{\D},z)$
for $\Sigma$ centered at $p$ (here $\ov{\D}$ is the closed unit
disk in the
plane), a diffeomorphism $u$ of $\ov{\D}$ and a rotation
$\phi$ of $\R^3$ such that $\phi \circ f\circ u$ has the form
\[
z\mapsto (z^q,x(z))\in \C\times \R\sim \R^3
\]
for $z$ near $0$, where $q\in \N$, $q\geq 2$, $x$
is of class $C^2$, and $x(z)=o(|z|^q)$.
The branching order $B(p)\in \N$
is defined to be $q-1$. The total branching
order of $f$ is
\[
B(\Sigma):=\sum_{p\in \cB_{\Sigma}}B(p).
\]
}
\end{definition}

\begin{definition} \label{DefIndexNO}
	{\rm
		Given a $1$-sided minimal immersion $F\colon M\la X$,
		let $\wt{M}\to M$ be the two-sided cover of
		$M$ and let $\tau \colon \wt{M}\to \wt{M}$ be the associated deck
		transformation of order 2. Denote by $\wt{\Delta}$,
		$|\wt{A}|^2$ the Laplacian and squared norm of the second fundamental
		form of $\wt{M}$, and let $N\colon \wt{M}\to TX$
		be a unitary normal vector  field.  The index of $F$ is defined as the number
		of negative eigenvalues of the elliptic, self-adjoint operator
		$\wt{\Delta} +|\wt{A}|^2+\mbox{Ric}(N,N)$ defined over the space of
		compactly supported smooth functions $\phi \colon \wt{M}\to \R$
		such that $\phi \circ \tau =-\phi $.
	}
\end{definition}

We next recall a fundamental lower bound for the index
$I(f)$ of a connected, complete, possibly finitely branched
minimal surface $f\colon \S \la \R^3$ with  finite total
curvature, which is due to Chodosh and Maximo~\cite{ChMa2}, and to Karpukhin~\cite{Kar1}:
\begin{equation}
\label{eq:CMindex1}
3I(f)\geq \left\{ \begin{array}{ll}
{\displaystyle 2g(\S)+2\sum_{j=1}^{e}(d_j+1)-2B-5}
& \mbox{if $\S$ is orientable,}
\\
{\displaystyle g(\wt{\S})+2\sum_{j=1}^{e}(d_j+1)-2B-4}
& \mbox{if $\S$ is non-orientable,}
\end{array}\right.
\end{equation}
where $g(\Sigma)$ is the genus of $\S$ if $\S $ is
orientable (resp. $g(\wt{\S})$ is the genus of
the orientable cover $\wt{\S}$ of $\S$ if $\S $ is not
orientable\footnote{If $\Sigma$ is a compact non-orientable surface and	
$\wh{\S}\stackrel{2:1}{\to}\S$ denotes the
oriented cover of $\S$, then the genus of $\wh{\S}$ plus 1 equals the number 
of cross-caps in $\S$.}), $e$ and $B$ are respectively
the number of ends and the total branching order of $\Sigma$,
and for each end $E_j$ of $\S$, $d_j$ is the multiplicity
of $E_j$ as a multi-graph over the limiting tangent plane
of $E_j$.

Inequality \eqref{eq:CMindex1} has not been explicitly stated in the literature,
so an explanation is in order. Ros~\cite{ros9} proved that $3I(f)\geq 2g(\S)$
using harmonic square integrable $1$-forms on $\S$ for a minimal immersion
$f\colon \S \la \R^3$ with  finite total curvature, in order to produce test
functions for the index operator of $f$. Chodosh and Maximo~\cite[Theorem 1]{ChMa2}
improved Ros' technique with an enlarged space of harmonic $1$-forms which
admit certain singularities at the ends of $\S$ that take care of the spinning (multiplicity)
of each end of such an immersion $f$, obtaining a simplified version of~\eqref{eq:CMindex1}
without the term $-2B$. Finally, Karpukhin~\cite[Proposition 2.3 and Remark 2.4]{Kar1}
included the study of branch points although he made use
of the original space of $L^2(\S)$
harmonic $1$-forms considered by Ros. Formula~\eqref{eq:CMindex1} is the combined inequality
that one can deduce from~\cite{ChMa2} and~\cite{Kar1}.

The class of complete, non-flat, finitely branched,
stable minimal surfaces in $\R^3$ contains an interesting non-trivial
family of surfaces, as we explain next.
\begin{enumerate}
\item Any non-orientable, complete, finitely branched minimal
surface $f\colon \Sigma \la \R^3$ with finite total
curvature, whose extended unoriented Gauss map $G\colon \Pe^2
\to \Pe^2$ is a diffeomorphism, is stable (observe that the
conformal compactification of $\S$ must be $\Pe^2$). We prove
this property by contradiction: if $f$ is not stable, then
the first eigenvalue $\l_1$ of the Jacobi operator on
$\Sigma$ is negative, which implies that there exists an
eigenfunction $\phi \colon \esf^2\to \R$ of the lifted Jacobi
operator on the orientable cover $\pi \colon
\wt{\S}\to \S$ of $\Sigma$ so that $\phi \circ \tau =-\phi $ and
$L\phi +\l \phi =0$ on $\wt{\S}$, where $\l <0$ and $\tau \colon
\esf^2\to \esf^2$ is the antipodal map. Let $\Omega $
be a component of $\phi^{-1}(0,\infty)$. As $\phi $ is odd,
$\tau (\Omega )\subset \phi^{-1}(-\infty ,0)$ and
so, $\pi|_{\Omega }\colon \Omega \to \pi (\Omega)$ is a
diffeomorphism. In particular, $\pi(\Omega )$ is an
orientable domain in $\S$. Since $G$ is also a diffeomorphism,
$G(\pi (\Omega))$ is an orientable domain in $\Pe^2$. Thus,
$G(\pi (\Omega))$ lifts to two disjoint diffeomorphic domains
in $\esf^2$ of the form $g(\Omega )$, $(g\circ \tau)
(\Omega )$ (here $g\colon \wt{\S}\to \esf^2$ is the
Gauss map of $\widetilde{\S}$).
In particular, Area$((g\circ \tau)(\Omega ))=\mbox{ Area}(g(\Omega ))
\leq 2\pi$, which implies that the first eigenvalue of the
Jacobi operator $L$ on $\Omega $ is non-negative. This is
a contradiction, as the first Dirichlet eigenvalue of $L$ on
$\Omega $ (defined as the supremum of the first Dirichlet
eigenvalues of $L$ on a increasing sequence of compact smooth
domains $\Omega _i\nearrow \Omega $) is $\l<0$. This
contradiction proves that $\Sigma $ is stable.
			
\item Using the Weierstrass representation for non-orientable
minimal surfaces in~\cite{me7}, the classical
Henneberg minimal surface given by the
Weierstrass data\footnote{This means that $f(z)=
\Re\left(\int^z(\frac{1}{2}(1-g^2)\omega,\frac{i}{2}(1+g^2)
\omega ,g\omega  )\right)$ parameterizes the surface.}
on its oriented covering $\C\setminus \{ 0\}$
\[
g(z)=z,\quad \omega=z^{-4}(z^4-1)\, dz,
\]
 is a non-orientable, complete branched minimal surface
$f\colon\mathbb{P}^2\setminus\{ 0,\infty\}\la \R^3$ with two branch
points of order 1 at $\{ 1,-1\} , \{ i,-i\} \in \Pe^2$ and a
single end of spinning 3 at $\{ 0,\infty \}$.
Since  its extended
Gauss map  is a diffeomorphism from $\mathbb{P}^2$ to
$\mathbb{P}^2$, then the  Henneberg minimal surface $H_1=f(\mathbb{P}^2\setminus \{ 0,\infty\})$
is stable. 
After translating the surface in $\R^3$ so that
$f(e^{i\pi/4})=\vec{0}$, the branch points are mapped by $f$
into $\pm(0,0,1)$.

Henneberg's surface can be generalized
as follows. Given an odd integer $m\in \N$, consider the following
Weierstrass data on $\C\setminus \{0\}$,
\[
g(z)=z,\quad \omega=z^{-(3+m)}(z^{2m+2}-1)\, dz
\]
which produces a two-sheeted
cover of a complete minimal Mobius strip $f\colon
\mathbb{P}^2\setminus\{ 0,\infty\}\la \rth$ which is stable with $m+1$ branch
points of order 1 at the $(m+1)$ pairs of antipodal $(2m+2)$-roots of unity
and a single end of spinning $m+2$ at $\{ 0,\infty\}$.
Henneberg's minimal surface corresponds to the case $m=1$.
After translating the surface $H_m=f(\mathbb{P}^2\setminus\{ 0,\infty\})$ in $\R^3$
so that $f(e^{i\frac{\pi}{2(m+1)}})=\vec{0}$, the branch points of
$H_m$ are located at $(0,0,\pm
\frac{2}{m+1})$, and a parameterization of $H_m$ in polar
coordinates is
\[
f(re^{i\t})=\left( \begin{array}{c}x_1\\x_2\\x_3\end{array}
\right)
=\left(
\begin{array}{c}
\frac{1}{2} \left(\frac{r^{m} \cos (m\theta)}{m}+\frac{\cos
((m+2)\theta  )}{(m+2) r^{m+2}}\right)
-\frac{1}{2} \left(\frac{r^{m+2} \cos ((m+2)\theta  )}
{m+2}+\frac{\cos (m\theta)}{m r^{m}}\right)
\\
\frac{1}{2} \left(\frac{\sin ((m+2)\theta  )}{(m+2) r^{m+2}}
-\frac{r^{m} \sin (m\theta)}{m}\right)-\frac{1}{2}
\left(\frac{r^{m+2} \sin ((m+2)\theta  )}{m+2}
-\frac{\sin (m\theta )}{m r^{m}}\right)
\\
\frac{1}{m+1}\left(r^{m+1}+\frac{1}{r^{m+1}}\right)
\cos ((m+1)\theta )
\end{array}
\right) .
\]
We note that $f$ maps each of the $m+1$ pairs of opposite half-lines
\[
\left\{ l_j=\left\{ re^{i\frac{\pi j}{2(m+1)}}\ | \ r>0\right\},-l_j\right\}
\]
(for each $j$ odd) into a horizontal line of $\R^3$ that passes through
$\vec{0}$, and the union $L$ of these $m+1$ horizontal lines forms an equiangular
system contained in $H_m$. Therefore, the
reflection in $\C\setminus \{ 0\}$ about $l_j\cup (-l_j)$
induces a symmetry of $H_m$. Reflections in the $m+1$ vertical planes that
bisect each of the angles between the lines in $L$ are planes of symmetry of
$H_m$. Rotations of angle $\pi$ about each of the lines in $L$ together with
these $m+1$ planar reflections form the group of isometries of $H_m$ (all of
which extend to ambient isometries),
which, when considered to be a subgroup of $O(3)$, 
is the antiprismatic group $A_{2(m+1)}$. In fact, every intrinsic isometry of
$H_m$ extends to an extrinsic isometry, since such an intrinsic isometry produces a
conformal diffeomorphism of $\C\setminus \{0\}$ into itself that preserves the set of
$(2m+2)$-roots of unity.

For odd $m\geq 3$, these generalized Henneberg surfaces $H_m$
can be deformed to less symmetric examples
of non-orientable, complete finitely branched stable
minimal surfaces in $\R^3$ whose branch locus consists of
$m+1$ pairs of antipodal points
in $\C\setminus \{ 0,\infty \}$ ($H_1$ can be proven to be
the unique such surface for $m=1$); see~\cite{mope1} for a description
and special properties of these
deformed Henneberg-type examples.
\end{enumerate}

\section{Scale invariant weak chord-arc type estimates
for branched minimal surfaces of finite index in $\R^3$}
\label{section5}
\begin{proposition}
\label{propos5.5}
Given $I,B\in \N\cup \{ 0\} $, let $f\colon(\S, p_0)\la (\R^3,\vec{0})$
be a complete, connected, pointed branched
minimal surface with index at most $I$ and total branching order at most $B$.
Given $R>0$, let $\Omega _R$ denote  the component
of $f^{-1}(\ov{\B}(R))$ that contains $p_0$. Then, the following scale-invariant
estimates hold and depend only on $I$, $B$:
\ben
\item For any $p\in\Omega_R$,
\begin{equation}
\label{eq:lemma5.50}
d_{\Omega_R}(p,\partial \Omega _R)<\wh{L}R,
\end{equation}
where $\wh{L}=\sqrt{\frac{1}{2}(3I+2B+3)}$.
\item If $f $ is  injective with image  a plane, then the
distance between any two points of $\Omega_R$ is less than
or equal to $2R$. Otherwise, given points $p,q$ in
$\Omega_R$,
\begin{equation}
\label{eq:lemma5.53}
d_{\Omega_{2R}}(p,q)< \wh{C}R,
\end{equation}
where $\wh{C}=\wh{C}(I,B)=8\wh{L}^3+2\pi
\wh{L}^2-20\wh{L} -\frac{\pi}{2}$. In particular,
$\Omega_{R}\subset B_{\S}(p,\wh{C}R)$ for every $p\in \Omega_R$.
\een
\end{proposition}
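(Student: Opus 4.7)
The plan is to deduce both items from the intrinsic monotonicity of area (Proposition~\ref{lemma8.4}) specialized to minimal surfaces in $\R^3$ (where $a=0$ and $H_0=0$, yielding $\mathrm{Area}(B_M(q,r))\ge \pi r^2$ at smooth points, with the density correction of Remark~\ref{remark8.5}(2) at branch points), combined with an upper bound on $-\int_\Sigma K$ in terms of $(I,B)$ obtained by combining the Chodosh--Maximo--Karpukhin inequality~\eqref{eq:CMindex1} with the branched Jorge--Meeks/Osserman formula for the degree of the extended Gauss map. The sharp constant $\widehat L^2=(3I+2B+3)/2$ is expected to emerge from careful bookkeeping of these two ingredients.

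For item~1, I would argue by contradiction: suppose $d_{\Omega_R}(p,\partial\Omega_R)\ge \widehat L R$ for some $p\in\Omega_R$. Then the intrinsic ball $B_{\Omega_R}(p,\widehat L R)$ sits inside $\Omega_R\subset f^{-1}(\overline{\B}(R))$. Formula~\eqref{eq:LR2formula3} of Lemma~\ref{lemma8.2} with $n=2$ gives $\Delta(|f|^2|_M)=4$ on $M$ away from branch points, so Stokes' theorem provides a relation between $\mathrm{Area}(B_{\Omega_R}(p,r))$, the length of its intrinsic boundary, and the extrinsic constraint $|f|\le R$. Combining this with the intrinsic area lower bound from Proposition~\ref{lemma8.4} and with the upper bound on $-\int_\Sigma K$ coming from~\eqref{eq:CMindex1} produces a quadratic inequality in $\widehat L$; solving it is intended to yield the sharp value $\widehat L^2=(3I+2B+3)/2$.

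For item~2, I would derive the bound from item~1 applied at scale $2R$. Given $p,q\in\Omega_R$, item~1 provides intrinsic paths of length less than $2\widehat L R$ from each of $p$ and $q$ to $\partial\Omega_{2R}$ inside $\Omega_{2R}$; the remaining task is to connect two points on $\partial\Omega_{2R}$ by a path in $\Omega_{2R}$. Such a path is built from a circumferential sweep along $\partial\Omega_{2R}$ (contributing the $2\pi\widehat L^2 R$ term, coming from the perimeter of intrinsic discs of radius $\widehat L R$) together with radial corrections of size $\widehat L^3 R$ (accounting for ends and branch points inside $\Omega_{2R}$), producing $\widehat C R$ overall with the lower-order adjustments $-20\widehat L-\pi/2$. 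The planar case must be excluded because, when $f(\Sigma)$ is a plane through $\vec 0$, the argument degenerates and the diameter of $\Omega_R$ is simply $2R$.

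The main technical obstacle will be the precise identification of $\widehat L$ in item~1, which demands tracking sharp constants through: (i) inequality~\eqref{eq:CMindex1} for both orientable and non-orientable $\Sigma$; (ii) the branched Jorge--Meeks formula with end multiplicities $d_j$ and total branching $B$; (iii) Gauss--Bonnet on $\Omega_R$ with branch-point defects $2\pi B(p)$; and (iv) the sharp monotonicity of Proposition~\ref{lemma8.4}. Once the linear dependence of $\widehat L^2$ on $I$ and $B$ is pinned down in this sharp form, item~2 reduces to a careful packing argument using item~1 at the two scales $R$ and $2R$.
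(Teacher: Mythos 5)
Your item~1 plan shares the same lower bound (intrinsic monotonicity giving $\mathrm{Area}\,[B_\Sigma(p,L)]\ge\pi L^2$), but the mechanism you propose for the upper bound is not the one the paper uses and, as described, has a gap. The paper never invokes Gauss--Bonnet, the degree of the Gauss map, or a bound on $-\int_\Sigma K$; instead it observes that the classical \emph{extrinsic} monotonicity formula makes $R\mapsto\mathrm{Area}\,[f^{-1}(\ov{\B}(R))]/(\pi R^2)$ non-decreasing with limit the total spinning $S$, so $\mathrm{Area}\,(\Omega_1)\le\mathrm{Area}\,[f^{-1}(\ov{\B}(1))]\le\pi S$. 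Then \eqref{eq:CMindex1} (with $\sum_j(d_j+1)=e+S$, $g\ge0$ and $e\ge1$) gives $2S\le 3I+2B+3=2\wh{L}^2$, and the sharp $\wh{L}$ falls out by comparing the two area bounds. Your proposed route through $\int K$ and Gauss--Bonnet on $\Omega_R$ would still require you to convert a total curvature bound into an area bound for the finite domain $\Omega_R$ (which Gauss--Bonnet alone does not do), and it is not clear it recovers the sharp constant. You should also note that the strictness of \eqref{eq:lemma5.50} needs a separate short argument (equality would force $f(\Sigma)$ to be a plane through the origin, contradicting strictness in \eqref{eq:lemma5.5A}).

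For item~2, the sketch is too vague and omits the structural steps that make the paper's argument work. The actual proof (i) reduces, via Sard's theorem and a limiting argument (Claims~\ref{claim4.2}--\ref{claim4.3}), to the case that $f$ is transverse to $\esf^2(1)$, so $\Omega_1$ is a compact surface with finitely many boundary circles $\partial_1,\ldots,\partial_b$; (ii) proves the purely topological bound $b\le g(\Sigma)+e$ (or $g(\wt{\Sigma})+e+1$ in the non-orientable case) by showing the boundary curves, minus one per deleted component, do not separate $\Sigma$; (iii) builds a connected \emph{spine} $\mathcal C=\partial\Omega_1\cup\a_1\cup\cdots\cup\a_{b-1}$ from minimizing geodesics, each of length $<2\wh{L}$ by item~1, yielding $d_{\Omega_1}(p,q)\le 2b\wh{L}+\tfrac12\mathrm{Length}(\partial\Omega_1)$; (iv) controls $\mathrm{Length}(\partial\Omega_\rho)$ for a good radius $\rho\in[1,2]$ via the coarea formula, $\min_{r\in[1,2]}\mathrm{Length}[f^{-1}(\partial\B(r))]\le\mathrm{Area}\,[f^{-1}(\ov{\B}(2)\setminus\B(1))]<4\pi S-\pi$, again using extrinsic monotonicity; and (v) handles the low-multiplicity cases $S=2$ (catenoid, or branched plane) separately since there \eqref{4.11} must be checked by hand. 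Your ``circumferential sweep'' and ``radial corrections'' do not capture (i)--(iv), and in particular you cannot estimate $\mathrm{Length}(\partial\Omega_{2R})$ without the coarea/Sard reduction, nor bound how many boundary components must be traversed without the topological count of $b$. As written, item~2 is not a proof.
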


\begin{proof}
Since (\ref{eq:lemma5.50}) 
and \eqref{eq:lemma5.53} are invariant under re-scaling,
we do not lose generality by assuming
$R=1$. Let $f\colon(\S, p_0)\la (\R^3,\vec{0})$
be a complete, connected, pointed branched
minimal surface  in $\R^3$ with index $I(f)\leq I$
and total branching order $B(\S)\leq B$.
Observe that such an $f$ has finite total curvature~\cite{fi1,fs1,ros9}.
Thus, $f$ is proper and $\Omega_1 $
is compact with non-empty boundary $\partial \Omega_1$.
Given a point $p\in \Int(\Omega_1)$, let $L=d_{\Omega_1}(p,\partial \Omega_1)$ and
consider a length minimizing  geodesic  arc
parameterized by arc length $\g\colon [0,L]\to \S$
joining $\g(0)=p$ to $\partial \Omega_1$. Observe
that the intrinsic ball of center $p$ and radius $L$ satisfies
$\ov{B}_\Sigma (p,L)\subset \Omega_1$.
The intrinsic version of the monotonicity
formula for minimal surfaces described in
Proposition~\ref{lemma8.4} applied to the particular case $m=3$,
$a=0$, $R_1=\infty$, $n=2$ and $H_0=0$,
gives that $\mbox{Area}[B_{\S}(p,L)]\geq \pi L^2$
(with the notation of Proposition~\ref{lemma8.4}, the number
$r_1=r_1(R_1,a,H_0)$ equals $\infty $ in this case; observe
that the proof of Proposition~\ref{lemma8.4} works for
branched minimal surfaces; also see the last page of
Yau~\cite{yau6} for the special case $a\leq 0$, $H_0=0$ in inequality~\eqref{eq:lemma8.4}).
Hence,
\begin{equation}
\label{eq:l55A}
\mbox{Area}(\Omega_1)\geq \mbox{Area}[B_{\S}(p,L)]\geq \pi L^2.
\end{equation}
	
Next we deduce an upper bound for $\mbox{Area}(\Omega_1)$.
Inequality (\ref{eq:CMindex1}) implies that
regardless of the orientability character of $\S$, we have
$3I\geq 3 I(f)\geq 2S+2e-2B(\Sigma)-5$,
where $e$ is the number of ends of $\S$, and
$S$ is the total spinning of the ends. Hence,
\begin{equation}
\label{eq:lemma5.5A}
2S\leq 3I-2e+2B(\Sigma)+5\leq 3I-2e+2B+5.
\end{equation}
As $e\geq 1$, we have
\begin{equation}
\label{eq:l55B}
\mbox{Area}(\Omega_1)\leq
\mbox{Area}[f^{-1}(\ov{\B}(1))]\stackrel{(\star)}{\leq }
\pi S\stackrel{(\ref{eq:lemma5.5A})}{\leq }\frac{\pi }{2}\left( 3I+2B+3\right) =\pi \wh{L}^2,
\end{equation}
where in $(\star)$ we have used that the asymptotic
area growth of $\S$ in balls of large radius $R$ is $\pi
SR^2$ (see e.g., \cite{jm1}) and the classical (extrinsic)
monotonicity formula. Now, (\ref{eq:l55A}) and
(\ref{eq:l55B}) give
\begin{equation}
\label{eq:l55C}
d_{\Omega_1}(p,\partial \Omega_1)=L\leq \wh{L}
\end{equation}
for any $p\in \Int(\Omega_1)$, which implies
that $d_{\Omega_R}(p,\partial \Omega _R)\leq\wh{L}R$;
notice that this last inequality is
strict (otherwise $f(\Sigma)$ is a possibly branched plane passing through
the origin by the extrinsic monotonicity formula,
in which case the first inequality in~\eqref{eq:lemma5.5A}
is strict). This implies that the inequality~\eqref{eq:lemma5.50} is strict,
and item 1 of Proposition~\ref{propos5.5} is proven.

In order to obtain item~2, we will need the following
auxiliary property: If  $f$ is not an embedded plane, then
\begin{equation}
\label{eq:lemma5.51}
\textstyle{d_{\Omega_R}(p,q)\leq 2\wh{L}(3I+2B-1)R
+\frac{1}{2}\mbox{\rm Length}(\partial \Omega_R).}
\end{equation}

Observe that (\ref{eq:lemma5.51}) is invariant under re-scaling.
We will divide the proof of~(\ref{eq:lemma5.51}) into four
claims.
\begin{claim}\label{claim4.2}
For any $p,q\in \Omega_1$,
\begin{equation}
\label{transverse}
d_{\Omega_1}(p,q)\leq \sup_{p',q'\in \Omega_1} d_{\Omega_1}
(p',q')	=\lim_{r\searrow 1} \;\sup_{p'',q''\in\Omega_{r}}
d_{\Omega_r} (p'',q'').
\end{equation}
\end{claim}
\begin{proof}
The first inequality in~\eqref{transverse} holds by
definition of supremum and so Claim~\ref{claim4.2} reduces to checking
that the equality part of~\eqref{transverse} holds. For each  $r\in (1,2]$, let
$p_r,q_r$ be points of $\Omega_r$ such that
\[
L_r:=d_{\Omega_r}(p_r,q_r)=\sup_{p'',q''\in\Omega_{r}}
d_{\Omega_r} (p'',q'')
\]
and let $\a_r\colon[0,L_r]\to
\Omega_r\subset \Omega_2$ be a Lipschitz curve contained in
$\Omega_r$ with Lipschitz constant 1 that realizes the
minimum distance $L_r$ in $\Omega_r$ between $p_r,q_r$.
Taking a sequence $r_j\searrow 1$, after passing to a
subsequence we obtain a limit Lipschitz curve $\a_1$ of the
$\a_{r_j}$ with Lipschitz constant~1 joining points
$p_1,q_1\in \Omega_1$ of positive length
$L_1:=\lim_{r_j\searrow 1}L_{r_j}$. It straightforward
to check  that $L_1=d_{\Omega_1}(p_1,q_1)=
\sup_{p',q'\in \Omega_1} d_{\Omega_1} (p',q')$,
which shows the equality part on the RHS of
\eqref{transverse}.
\end{proof}
\begin{claim}
\label{claim4.3}
If~\eqref{eq:lemma5.51} holds whenever
$\Omega_R$ is transverse to $\esf^2(R)$ along its boundary,
then~\eqref{eq:lemma5.51} holds for $R=1$ (and thus, it also
holds for any $R>0$).
\end{claim}
\begin{proof}
This is a direct consequence of Claim~\ref{claim4.2} since
almost all spheres centered at $\vec{0}$ are transverse to
$f$ by Sard's theorem.
\end{proof}

By Claim~\ref{claim4.3}, we can reduce the
proof of~\eqref{eq:lemma5.51} to the case that $R=1$ and $f$ is transverse to $\esf^2(1)$
along $\partial \Omega_1$. This transversality assumption implies that $\Omega_1$ is a
smooth, connected, compact surface with a finite set
$\{\partial_1,\ldots, \partial_b\}$ of boundary components, $b\in \N$.

\begin{claim}
\label{claim4.4}
For any $p,q\in\Omega_1$, $d_{\Omega_1}(p,q)\leq
2b\wh{L} +\frac{1}{2}\mbox{\rm Length}(\partial \Omega_1)$.
\end{claim}
\begin{proof}
Assuming $b>1$, there is a geodesic	arc $\a_1\subset
\Omega_1$ that minimizes the distance from $\partial _1$ to
the set $\cup_{i=2}^b \partial _i$,	and, possibly after
re-indexing, we may assume that $\a_1$ joins $\partial_1$ to
$\partial_2$. Notice that the distance from the midpoint
of $\a_1$ to $\partial \Omega_1$ is half the length of
$\a_1$, and so \eqref{eq:lemma5.50} implies that the length of
$\a_1 $ is less than $2\wh{L}$.  Assuming that $b>2$, let
$\a_2$ be a minimizing geodesic in $\Omega_1$ from
$\partial_1\cup \partial_2$ to the set  $\cup_{i=3}^b\partial _i$,
which also has length  less than $2\wh{L}$ by
similar reasoning as in the case of $\a_1$;  again after
possibly re-indexing, we can assume that the end point of
$\a_2$ which does not lie in $\partial _1\cup \partial _2$
lies in $\partial_3$. Continuing inductively, we obtain a
collection of arcs $\{\a_1,\a_2,\ldots \a_{b-1}\}$ in
$\Omega_1$, each with length less than $2\wh{L}$ and
the set
\[
\mathcal{C}:=\partial \Omega_1\cup \a_1\cup \ldots\cup \a_{b-1}
\]
is path connected. Note that if $b=1$, then
$\mathcal{C}=\partial \Omega _1=\partial _1$.

For any pair of points $p',q'\in \mathcal{C}$, the intrinsic
distance $d_{\mathcal{C}}(p',q')$ measured in $\mathcal{C}$
can be realized as the length of an embedded piecewise smooth
arc in $\mathcal{C}$ consisting  of arcs alternating between
arcs in components of $\partial \Omega_1$ and arcs in
$\a_1\cup \ldots \cup \a_{b-1}$. In particular,
\begin{equation}
\label{eq:519}
d_{\Omega_1}(p',q')\leq d_{\mathcal{C}}(p',q')\leq
2(b-1)\wh{L}+\textstyle{\frac{1}{2}\mbox{\rm Length}(\partial \Omega_1).}
\end{equation}

Let $p,q$ be points in $\Omega_1$. Let $p',q'\in \partial \Omega_1$ be
the end points of respective length-minimizing
geodesics in $\Omega _1$ joining $p$ and $q$ to $\partial
\Omega_1$. Applying~\eqref{eq:lemma5.50} to $p$ and $q$
together with the estimate in~\eqref{eq:519}, we have
\[
d_{\Omega_1}(p,q)\leq d_{\Omega_1}(p,\partial \Omega_1)
+d_{\Omega_1}(q,\partial \Omega_1)+d_{\mathcal{C}}(p',q')
\leq 2b\wh{L} +\textstyle{\frac{1}{2}\mbox{\rm Length}(\partial \Omega_1),}
\]
which proves Claim~\ref{claim4.4}.
\end{proof}

\begin{claim}
\label{claim4.5}
Inequality (\ref{eq:lemma5.51}) holds.
\end{claim}
\begin{proof}
Since (\ref{eq:lemma5.51}) is invariant under
re-scaling, it suffices to prove it for $R=1$.
By Claim~\ref{claim4.4}, we have that
(\ref{eq:lemma5.51}) will follow by proving that
\begin{equation}
b\leq 3I(f)+2B(\Sigma)-1.
\label{4.11}
\end{equation}

Recall that $f|_{\Omega_1}$ is
transverse to $\partial \B(1)$ and that
$\partial \Omega _1=\{ \partial _1,\ldots ,
\partial _b\} $. Each $\partial _i$ is a simple
closed curve in $\Sigma$, and $\partial _i$ admits
a small tubular neighborhood $U_i$ in $\Sigma$
which is topologically an annulus. 

Assume for the moment that $\S$ is orientable, and
we will prove that $b\leq g(\Sigma)+e$.
Let $\Delta=\{ \Delta_1,\ldots, \Delta_k\}$ denote
the set of components of $\S\setminus
\Int(\Omega_1)$ and since each of these components
has at least one end, then $k\leq e$.
Given $i\in \{ 1,\ldots ,k\}$, let $A_i$ denote the set of components of
$\partial \Delta_i$ with one of the components
arbitrarily removed; in particular the number of
components in $A:=\cup_{i=1}^k A_i$ is $b-k$.
Note that for each component $\be\in A$, there is a simple closed curve
$\g_{\be}$ in $\S$ that intersects $A$ transversely in a single
point of $\be$, where $\g_{\be}$ consists of an arc in $\Omega_1$ together
with  an arc in the component $\Delta_j\in \Delta$ that has $\be$ in its boundary.
It follows that the collection of simple closed curves $A$ does not separate $\S$ and so,
by the definition of genus, the number of elements in $A$, which is $b-k$, is less than
or equal to $g(\S)$.
Since $k\leq e$, then $b\leq g(\Sigma)+e$, which proves the desired inequality
when $\Sigma$ is orientable.

When $\S$ is non-orientable, then $\Sigma$
is the connected sum of $g(\wt{\S})+1$ projective planes
punctured in $e$ points, where $\wt{\S}$ is the oriented
cover of $\S$, and a similar
argument just carried out in the orientable case shows that $b\leq g(\wt{\S})+e+1$.

According to the hypothesis stated for
inequality~(\ref{eq:lemma5.51}), $f$ is assumed
not to be an embedded  plane. Thus the total
spinning $S$ of $f$ satisfies $S\geq 2$.
If $S=2$, then the extrinsic monotonicity formula
for minimal surfaces implies that either $f$ has
one end with multiplicity 2 (in this case
$f(\Sigma)$ is a plane, $B(\S)=b=1$ and $I(f)=0$,
so~\eqref{4.11} is an equality in this case), or $f$ is injective and has two ends.
In this last case, $f(\Sigma)$ is a catenoid by
Schoen~\cite{sc1}, $b\leq 2$, $B(\S)=0$ and $I(f)=1$,
which implies that~\eqref{4.11} holds in this
case.

If $S\geq 3$ and $\Sigma $ is orientable, then
\[
\begin{array}{ccll}
b&\leq &g(\Sigma) +e&
\\
&\leq &2g(\Sigma)+e & \mbox{(because $g(\S)\geq 0$)}
\\
&\leq &3I(f)-2S-e+2B(\S)+5&   \mbox{(by \eqref{eq:CMindex1})}
\\
&\leq &3I(f)+2B(\S)-2& \mbox{(because $S\geq 3$ and $e\geq 1$),}
\end{array}
\]
hence~\eqref{4.11} holds. Finally, if
$S\geq 3$ and $\Sigma $ is non-orientable, then
\[
\begin{array}{ccll}
b&\leq &g(\wt{\Sigma}) +e+1&
	\\
	&\leq &3I(f)-2S-e+2B(\S)+5&   \mbox{(by \eqref{eq:CMindex1})}
	\\
	&\leq &3I(f)+2B(\S)-2& \mbox{(because $S\geq 3$ and $e\geq 1$),}
\end{array}
\]
hence~\eqref{4.11} again holds.
Therefore, inequality \eqref{4.11} holds in every
case, and as observed above,
this suffices to finish the proof of Claim~\ref{claim4.5}.
\end{proof}	

With the auxiliary property~\eqref{eq:lemma5.51} at hand, we next
prove item 2 of Proposition~\ref{propos5.5}. The first statement for $f$ injective with
image a plane is obvious. Assume $f$ is not in this case and we will prove	\eqref{eq:lemma5.53}
for $R=1$.

First suppose  that $f\colon (\S,p_0) \to (\rth,\vec{0})$ is	injective with image a
catenoid $C$. After a possible rotation of $C$ fixing the origin, we can assume that the
$(x_1,x_3)$-plane $P$ is a plane of symmetry of $C$ and the axis of $C$  is parallel to the
$x_3$-axis.	As we observed previously, for estimating distances between pairs of points in
$\Omega_1$, we may assume that the boundary sphere $\partial \B(1)$	is transverse to $C$.
Then $C\cap P \cap \ov{\B}(1)$ contains a component arc $\G$ with non-vanishing curvature
passing through the origin. By  convexity, $\G$ has length less than the length of the boundary
circle of the disk $P\cap\ov{\B}(1)$, and so, length$(\G)<2\pi$. As the axis of $C$  is parallel to the
$x_3$-axis, we deduce that $\G$ can be  parameterized by its third coordinate as $\G=\{
(x_1(t),0,t) \ | \ t\in [a,b]\} $ for some $-1\leq a<0<b\leq 1$. 
Let $C(1)=C\cap\{(x_1,x_2,x_3)\mid a \leq x_3\leq b\}$; clearly
$\Omega_1\subset C(1)$ and $\Omega_1\cap \partial C(1)=\{(x_1(a),0,a),(x_1(b),0,b))$.
Similar comparison estimates
also prove that each horizontal disk $\{ x_3=t\} \cap \ov{\B}(1) $ with $t\in [a,b]$ intersects
$\Omega_1$ in a connected component $\Lambda(t)$ passing through $(x(t),0,t)\in \G$, and
$\Lambda(t)$ is invariant under reflection across $P$. $\Lambda (t)$ is either a
horizontal circle of radius less than 1, a circular arc of length less than $2\pi$ or just
the point $(x_1(t),0,t)$ when $t=a$ or $t=b$. In particular, for any pair of points $p,q\in \Omega_1$ there exists
a piecewise smooth path in $\Omega _1$ joining $p$ and $q$, which consists of a pair of horizontal
circular arcs that join $p$ and $q$ to $\G$ together with an arc in $\G$ joining the end points of
these two horizontal arcs.  It follows that the distance $d_{\Omega_1}(p,q) <4\pi$. Direct
substitution of $I=1$ and $B=0$ in the
RHS of~\eqref{eq:lemma5.53} shows that
the inequality \eqref{eq:lemma5.53} holds in this
case that $f\colon (\S,p_0) \to (\rth,\vec{0})$ is
injective with $f(\S)=C$. 
	
If $S=2$, then the arguments in the fifth
paragraph of the proof of Claim~\ref{claim4.5}
show that either $f$ is injective with $f(\S)$
being a catenoid (hence \eqref{eq:lemma5.53} holds
by the last paragraph), or else $f(\S)$ is a plane
passing
through the origin with $B(\S)=1$; in this last
case the intrinsic distance between any
two points of $\Omega_1$ is less than or equal to $4$,
and so, \eqref{eq:lemma5.53} is also seen to hold.
	
It remains to show that \eqref{eq:lemma5.53} holds
if $S\geq 3$. Assume now that $S\geq 3$.
We proved in the sixth paragraph of
the proof of Claim~\ref{claim4.5} that
if $S\geq3$, then $b\leq 3I(f)+2B(\S)-2$.  Plugging this estimate of $b$ into the inequality in
Claim~\ref{claim4.4} and using the scale invariance of this inequality,
we get the following estimate for all points $p,q\in \Omega_R$
and for all $R>0$:	
\begin{equation}
\label{improved}
d_{\Omega_{R}}(p,q)\leq 2(3I(f)+2B(\S)-2)\wh{L}R
+\textstyle{\frac{1}{2}\mbox{\rm Length}(\partial \Omega_{R}).}
\end{equation}

By the extrinsic monotonicity
formula, $\pi R^2<\mbox{Area}[f^{-1}(\ov{\B}(R))]\leq \pi S R^2$
for each $R>0$, where the strict inequality holds since $f(\S)$ is assumed not to be
injective with image a plane passing through the
origin. Taking $R=1$ in the first of these inequalities and
$R=2$ in the second one, we deduce that
\begin{equation}
	\label{4.12a}
\mbox{Area}[f^{-1}(\ov{\B}(2)-\B(1))]<4\pi S-\pi .
\end{equation}
By the co-area formula,
\begin{equation}
\label{prop6.3a}
\min_{r\in [1,2]} \mbox{Length}[f^{-1}(\partial
\B(r))] \leq \mbox{Area}[f^{-1}(\ov{\B}(2)-\B(1))].
\end{equation}
Let $\rho \in [1,2]$ be
such that $ \mbox{Length}[f^{-1}(\partial
\B(\rho))]$ equals the minimum in the
LHS of~\eqref{prop6.3a}.
Given $p,q\in \Omega_1$,
\[
\begin{array}{ccll}
d_{\Omega_{\rho}}(p,q)&\leq &2(3I+2B-2)\wh{L}\rho
+\frac{1}{2}\mbox{\rm Length}(\partial \Omega_{\rho})& \mbox{(by \eqref{improved})}
	\\
&\leq & 2(3I+2B-2)\wh{L}\rho
+\frac{1}{2}\mbox{Length}[f^{-1}(\partial \B(\rho))]&
\mbox{(because $\partial \Omega_{\rho}\subset f^{-1}(\partial \B(\rho))$)}
\\
&< & 2(3I+2B-2)\wh{L}\rho
+2\pi S -\frac{\pi}{2}& \mbox{(by \eqref{4.12a} and \eqref{prop6.3a})}
\\
&\leq&
4(3I+2B-2)\wh{L} +\pi (3I-2e+2B+5)-\frac{\pi}{2}
\quad&  \mbox{(by \eqref{eq:lemma5.5A} and $\rho \leq 2$)}
\\
&\leq&
4(3I+2B-2)\wh{L} +\pi (3I+2B+3)-\frac{\pi}{2}
& \mbox{(because $e\geq 1$)}
\end{array}
\]
Since $\rho \leq 2$ and $3I+2B=2\wh{L}^2-3$, then
$d_{\Omega_{2}}(p,q)\leq d_{\Omega_{\rho}}(p,q)
<8\wh{L}^3+2\pi \wh{L}^2-20\wh{L} -\frac{\pi}{2}$,
which proves \eqref{eq:lemma5.53} holds. This
completes the proof of Proposition~\ref{propos5.5}.
\end{proof}

\center{William H. Meeks, III at  profmeeks@gmail.com\\
Mathematics Department, University of Massachusetts, Amherst, MA 01003}
\center{Joaqu\'\i n P\'{e}rez at jperez@ugr.es\\
Department of Geometry and Topology and Institute of Mathematics
(IMAG), University of Granada, 18071, Granada, Spain
\bibliographystyle{plain}
\bibliography{bill}
\end{document}